\definecolor{auburn}{rgb}{0.43, 0.21, 0.1}
\definecolor{blue(pigment)}{rgb}{0.2, 0.2, 0.6}
\definecolor{britishracinggreen}{rgb}{0.0, 0.26, 0.15}
\definecolor{cobalt}{rgb}{0.0, 0.28, 0.67}
\definecolor{ceruleanblue}{rgb}{0.16, 0.32, 0.75}
    \DeclareSymbolFont{usualmathcal}{OMS}{cmsy}{m}{n}
    \DeclareSymbolFontAlphabet{\mathcal}{usualmathcal}
\newcommand*{\defeq}{\mathrel{\vcenter{\baselineskip0.5ex \lineskiplimit0pt
                     \hbox{\scriptsize.}\hbox{\scriptsize.}}}%
                     =}
\def\be{\begin{equation}}    
\def\ee{\end{equation}}
\def\bitem{\begin{itemize}}
\def\eitem{\end{itemize}}
\def\benum{\begin{enumerate}}
\def\eenum{\end{enumerate}}
\def\ra{\rightarrow}
\def\surj{\twoheadrightarrow} 
\def\Sch{\textrm{Sch}}
\def\Z{\mathbb Z}
\def\C{\mathbb C}
\def\O{\mathscr O}
\def\char{\textrm{char}}
\def\op{\textrm{op}}
\DeclareMathOperator{\Sets}{Sets}
\DeclareMathOperator{\Pic}{Pic}
\DeclareMathOperator{\Spec}{Spec\,}
\DeclareMathOperator{\dd}{d}
\DeclareMathOperator{\Hilb}{Hilb}
\DeclareMathOperator{\Sym}{Sym}
\DeclareMathOperator{\Aut}{Aut}
\DeclareMathOperator{\Ext}{Ext}
\DeclareMathOperator{\End}{End}
\DeclareMathOperator{\red}{red}
\DeclareMathOperator{\Spl}{Spl}
\newenvironment{proofof}[1]{\par
  \pushQED{\qed}%
  \normalfont \topsep6\p@\@plus6\p@\relax
  \trivlist
  \item[\hskip3\labelsep
        \itshape
    Proof of #1\@addpunct{.}]\ignorespaces
}{%
  \popQED\endtrivlist\@endpefalse
}
\newtheoremstyle{note}
{3pt}
{3pt}
{}
{}
{\itshape}
{.}
{.5em}
{}
\theoremstyle{note}
\newtheorem*{conventions*}{Conventions}
\theoremstyle{definition}
\newtheorem*{lemma*}{Lemma}
\newtheorem*{theorem*}{Theorem}
\newtheorem*{example*}{Example}
\newtheorem*{fact*}{Fact}
\newtheorem*{notation*}{Notation}
\newtheorem*{definition*}{Definition}
\newtheorem*{prop*}{Proposition}
\newtheorem*{remark*}{Remark}
\newtheorem*{corollary*}{Corollary}
\newtheorem{definition}{Definition}[section]
\newtheorem{remark}[definition]{Remark}
\newtheoremstyle{thm} 
        {3mm}
        {3mm}
        {\slshape}
        {0mm}
        {\scshape}
        {.}
        {1mm}
        {}
\theoremstyle{thm}
\newtheorem{teo}[definition]{Theorem}
\newtheorem{cor}[definition]{Corollary}
\newtheorem{lemma}[definition]{Lemma}
\newtheorem{prop}[definition]{Proposition}
\newtheorem{theorem}{Theorem}
\tikzset{commutative diagrams/arrow style=math font}
\tikzset{commutative diagrams/.cd,
mysymbol/.style={start anchor=center,end anchor=center,draw=none}}
\newcommand\MySymb[2][\square]{%
  \arrow[mysymbol]{#2}[description]{#1}}
\numberwithin{equation}{section}
\title[Hilbert scheme of hyperelliptic Jacobians and Picard sheaves]{The Hilbert scheme of hyperelliptic Jacobians \\ and moduli of Picard sheaves}
\author[Andrea T. Ricolfi]{Andrea T. Ricolfi}
\address{Max Planck Institut f\"{u}r Mathematik}
\email{atricolfi@gmail.com}
\keywords{Jacobians, Hilbert schemes, Picard sheaves, Fourier--Mukai transform}
\subjclass[2010]{14C05, and 14H40}
\begin{document}

\maketitle

\begin{abstract}
Let $C$ be a hyperelliptic curve embedded in its Jacobian $J$ via an Abel--Jacobi map. We compute the scheme structure of the Hilbert scheme component of $\Hilb_J$ containing the Abel--Jacobi embedding as a point. We relate the result to the ramification (and to the fibres) of the Torelli morphism $\mathcal M_g\ra \mathcal A_g$ along the hyperelliptic locus. As an application, we determine the scheme structure of the moduli space of Picard sheaves (introduced by Mukai) on a hyperelliptic Jacobian.
\end{abstract}

\tableofcontents

\section{Introduction}

\subsection*{Main result}
In this paper we study the deformation theory of a smooth \emph{hyperelliptic} curve $C$ of genus $g\geq 2$, embedded in its Jacobian $J = (\Pic^0C,\Theta_C)$ via an Abel--Jacobi map
\[
\mathsf{aj}\colon C\hookrightarrow J.
\]
We work over an algebraically closed field $k$ of characteristic different from $2$. Our aim is to compute the scheme structure of the Hilbert scheme component 
\[
\Hilb_{C/J} \subset \Hilb_J
\]
containing the point defined by $\mathsf{aj}$. 
It is well known that the embedded deformations of $C$ into $J$ are parametrised by translations of $C$, and that they are \emph{obstructed} as long as $g\geq 3$ (see the next section for more details). In other words $\Hilb_{C/J}$ is \emph{singular}, with reduced underlying variety isomorphic to $J$. The tangent space dimension to the Hilbert scheme has been computed in \cite{LangeSernesi,Griffiths1}. The result is
\[
\dim_k H^0(C,N_C) = 2g-2.
\]
Therefore, as $\dim J = g$, the non-reduced structure of $\Hilb_{C/J}$ along $J$ is accounted for (up to first order) by $g-2$ extra tangents. By homogeneity of the Jacobian, it is natural to expect a decomposition
\[
\Hilb_{C/J} = J\times R_g
\]
for some artinian scheme $R_g$ with embedding dimension $g-2$. As we shall see, this is precisely what happens, and $R_g$ turns out to be the ``smallest'' (in the sense of Lemma \ref{lemma3gd82}) artinian scheme with the required embedding dimension. 
More precisely, let 
\be\label{rg}
R_g = \Spec k[s_1,\ldots,s_{g-2}]/\mathfrak m^2,
\ee
where $\mathfrak m = (s_1,\ldots,s_{g-2})$ is the maximal ideal of the origin. The main result of this paper (proved in Theorem \ref{main} in the main body) is the following.

\begin{theorem}\label{thm:thm1_hilb}
Let $C$ be a hyperelliptic curve of genus $g\geq 2$ over a field $k$ of characteristic different from $2$, and let $J$ be its Jacobian. 
Then there is an isomorphism of $k$-schemes
\[
\Hilb_{C/J} \,\cong\, J\times R_g,
\]
where $R_g$ is the artinian scheme \eqref{rg}.
\end{theorem}

\subsection*{Interpretation}
Let $\mathcal M_g$ be the moduli stack of smooth curves of genus $g$, and let $\mathcal A_g$ be the moduli stack of principally polarised abelian varieties of dimension $g$. The Torelli morphism
\[
\tau_g\colon \mathcal M_g\ra \mathcal A_g
\]
sends a curve $C$ to its Jacobian $J = \Pic^0C$, principally polarised by the Theta divisor $\Theta_C$. 
One can interpret the artinian scheme $R_g$ as the fibre of $\tau_g$
over a hyperelliptic point $[J,\Theta_C]\in \mathcal A_g$. This makes explicit the link between the \emph{ramification} of $\tau_g$ along the hyperelliptic locus (in other words, the failure of the infinitesimal Torelli property) and the singularities of the Hilbert scheme $\Hilb_{C/J}$ (in other words, the \emph{obstructions} to deform $C$ in $J$). We come back to this in Section \ref{remark:torelli_fibre}.

\subsection*{Moduli of Picard sheaves}
As an application of our result, in Section \ref{sec:picsheaves} we compute the scheme structure of certain moduli spaces of \emph{Picard sheaves} on a hyperelliptic Jacobian $J$. Mukai introduced these spaces as an application of his Fourier transform; he completed their study in the non-hyperelliptic case \cite{Mukai1,Mukai2}, leaving open the hyperelliptic one. 

Let $F$ be the Fourier--Mukai transform of a line bundle $\xi=\O_C(dp_0)$, where $p_0\in C$ and we assume $1\leq d\leq g-1$ to ensure that $F$ is a simple sheaf on $J$. Let $M(F)$ be the connected component of the moduli space of simple sheaves containing the point $[F]$. Mukai proved that $M(F)_{\red} = \widehat J\times J$, the isomorphism being given by the family of twists and translations of $F$ \cite[Example 1.15]{Mukai2}. Under the same assumptions of Theorem \ref{thm:thm1_hilb}, we prove the following (cf.~Theorem \ref{cor8183} in the main body of the text).

\begin{theorem}\label{thm:thm2_Picard}
There is an isomorphism of $k$-schemes 
\[
M(F)\,\cong\, \widehat J\times J\times R_g.
\]
\end{theorem}

\subsection*{Enumerative Geometry of abelian varieties}
A motivation for understanding the \emph{scheme structure} of classical moduli spaces such as the Hilbert scheme (Theorem \ref{thm:thm1_hilb}) and the moduli space of Picard sheaves (Theorem \ref{thm:thm2_Picard}) comes from the subject of Enumerative Geometry of abelian varieties. 

For instance, the Hilbert scheme of curves (in a $3$-fold) is the main player in Donaldson--Thomas theory --- see, for instance, \cite{BOPY16} for an exhaustive treatment (including several interesting conjectures) of the Enumerative Geometry of curves on abelian surfaces and $3$-folds. Understanding the scheme structure (or even the closed points!) of the Hilbert scheme of curves on a $3$-fold is very often a hopeless problem. Of course, Donaldson--Thomas theory has developed several sophisticated tools to deal with the lack of an explicit description of the Hilbert scheme; however, this paper shows that, at least for an arbitrary \emph{Abel--Jacobi curve}, the Hilbert scheme can be described completely. Thus an immediate corollary of Theorem \ref{thm:thm1_hilb} is the explicit description of the Donaldson--Thomas theory of an Abel--Jacobi curve, cf.~Section \ref{sec:DT}.

On the other hand, it is conceivable that the theory of \emph{Picard sheaves}, arising as a direct application of the Fourier--Mukai transform, could be exploited to aim for a deeper understanding of the intersection theory and cohomology of Jacobians, and possibly their compactifications. Having at one's disposal global results such as Theorem \ref{thm:thm2_Picard} might allow one to treat the \emph{whole} moduli space (the universal Jacobian over the moduli space of curves) at once in developing a theory of \emph{tautological rings} for (possibly compactified, universal) Jacobians, by combining Fourier--Mukai techniques with suitable analogues of the intersection theoretic calculations carried out in \cite{pagani2018pullbacks}.

\begin{conventions*}
We work over an algebraically closed field $k$ of characteristic $p\neq 2$. All curves are smooth and proper over $k$, they are (geometrically) connected, and their Jacobians are principally polarised by the Theta divisor.
\end{conventions*}

\section{Ramification of Torelli and the Hilbert scheme}

In this section we provide the framework where the problem tackled in this paper naturally lives in. 

\subsection{Deformations of Abel--Jacobi curves}
The following theorem was proved in the stated form by Lange--Sernesi, but see also the work of Griffiths \cite{Griffiths1}.

\begin{teo}[{\cite[Theorem 1.2]{LangeSernesi}}]\label{LSthm}
Let $C$ be a smooth curve of genus $g\geq 3$.
\bitem
\item [(i)] If $C$ is non-hyperelliptic, then $\Hilb_{C/J}$ is smooth of dimension $g$.
\item [(ii)] If $C$ is hyperelliptic, then $\Hilb_{C/J}$ is irreducible of dimension $g$ and everywhere non-reduced, with Zariski tangent space of dimension $2g-2$.
\eitem
In both cases, the only deformations of $C$ in $J$ are translations.
\end{teo}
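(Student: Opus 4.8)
The plan is to reduce everything to the normal bundle $N_{C/J}$ and then to the classical geometry of the canonical multiplication map. Since $J$ is an abelian variety its tangent bundle is trivial, $T_J \cong \O_J \otimes H^1(C,\O_C)$, so restriction gives $T_J|_C \cong \O_C^{\oplus g}$, with the derivative of $\mathsf{aj}$ realised as the inclusion $T_C \hookrightarrow \O_C^{\oplus g}$. Dualising the normal bundle sequence $0 \to T_C \to T_J|_C \to N_{C/J} \to 0$ then identifies the conormal bundle $N_{C/J}^\vee$ with the kernel of the canonical evaluation map $H^0(C,\omega_C)\otimes\O_C \to \omega_C$. First I would record this identification cleanly, as it converts the deformation theory into statements about one explicit bundle.

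Next I would compute the Zariski tangent space $H^0(C,N_{C/J})$ from the long exact sequence of the normal bundle sequence. As $g \geq 2$ we have $H^0(C,T_C)=0$, so the sequence reads $0 \to H^0(\O_C)^{\oplus g} \to H^0(N_{C/J}) \to H^1(C,T_C) \xrightarrow{\phi} H^1(\O_C)^{\oplus g}$, whence $\dim_k H^0(N_{C/J}) = g + \dim_k \ker\phi$. The connecting map $\phi\colon H^1(C,T_C) \to H^1(C,\O_C)\otimes H^1(C,\O_C)$ is the cup product induced by $\mathrm{d}\,\mathsf{aj}$; it factors through the symmetric part and coincides with the differential $\mathrm{d}\tau_g$ of the Torelli morphism. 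By Serre duality $\phi$ is the transpose of the multiplication map $\mu\colon \Sym^2 H^0(C,\omega_C) \to H^0(C,\omega_C^{\otimes 2})$, so $\rk\phi = \rk\mu$ and $\dim_k\ker\phi = (3g-3) - \rk\mu$. At this point I would invoke the classical theorem of Max Noether and Petri: $\mu$ is surjective (rank $3g-3$) precisely when $C$ is non-hyperelliptic, while for hyperelliptic $C$ the canonical image is the rational normal curve of degree $g-1$, so $\im\mu = H^0(\P^1,\O(2g-2))$ has dimension $2g-1$ and $\coker\mu$ has dimension $g-2$. Feeding this back gives $\dim_k H^0(N_{C/J}) = g$ in the non-hyperelliptic case and $g+(g-2)=2g-2$ in the hyperelliptic case, matching the tangent space counts in (i) and (ii).

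It remains to pin down the global scheme structure, and this is where the real work lies. The $g$-dimensional family of translates of $C$ furnishes a closed subscheme of $\Hilb_J$ isomorphic to $J$, and the crux is to show these are the only embedded deformations, so that the reduced Hilbert scheme is exactly $J$, irreducible of dimension $g$. I would establish this through the characterization of curves of minimal cohomology class: the class $\tfrac{1}{(g-1)!}[\Theta_C]^{g-1}$ is locally constant in flat families, so every member of a family of subschemes of $J$ with special fibre $\mathsf{aj}(C)$ represents it, and by Matsusaka's criterion such a curve is an Abel--Jacobi curve, hence a translate of $C$. This simultaneously yields dimension $g$, irreducibility, and the assertion that all local deformations are translations. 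In the non-hyperelliptic case the tangent dimension $g$ then equals the dimension, so $\Hilb_{C/J}$ is smooth, proving (i). In the hyperelliptic case the tangent dimension $2g-2$ strictly exceeds $g$ at every point of $J$ — constant along the reduced locus by homogeneity of the translation action — so $\Hilb_{C/J}$ is everywhere non-reduced, proving (ii).

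The main obstacle is exactly this last dimension-and-irreducibility input: showing that the extra $g-2$ tangent directions produced by $\coker\mu$ are obstructed, so that the Hilbert scheme does not grow beyond the translation family. Rather than attempting a direct second-order obstruction computation against the cokernel of the multiplication map, the minimal-class characterization seems the cleanest route, since it controls \emph{all} embedded deformations at once and immediately forces the reduced structure to be $J$.
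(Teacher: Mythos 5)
Your proposal follows essentially the same route as the paper: the normal bundle cohomology sequence with the connecting map identified (via Serre duality) as the transpose of the multiplication map $\mu_C\colon \Sym^2 H^0(C,K_C)\to H^0(C,K_C^2)$, Max Noether's theorem to decide surjectivity, and the Matsusaka--Ran minimal-class criterion (plus Torelli) to show every point of $\Hilb_{C/J}$ is a translate of $C$, so that the reduced scheme is $J$. The only cosmetic difference is that you compute $\dim\coker\mu_C = g-2$ directly from the rational normal curve, where the paper cites Oort--Steenbrink; the argument is correct.
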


The statement of Theorem \ref{LSthm} is proved over $\C$ in \cite{LangeSernesi}, but it holds over algebraically closed fields $k$ of arbitrary characteristic. To see this, we need Collino's extension of the Ran--Matsusaka criterion for Jacobians to an arbitrary field, which we state here for completeness.

\begin{teo}[\cite{Collino84}]\label{Coll}
Let $X$ be an abelian variety of dimension $g$ over an algebraically closed field $k$. Let $D$ be an effective $1$-cycle generating $X$ and let $\Theta\subset X$ be an ample divisor such that $D\cdot \Theta = g$. Then $(X,\Theta,D)$ is a Jacobian triple.
\end{teo}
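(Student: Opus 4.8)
The plan is to reduce Collino's theorem to Matsusaka's classical minimal-class criterion, whose proof is algebraic and survives in arbitrary characteristic, and to give an intersection-theoretic argument (in place of the Hodge-theoretic one available over $\C$) for the single delicate point: that the numerical hypotheses force $D$ to carry the minimal cohomology class. Writing $\theta=[\Theta]$, normalized by $\theta^{g}=g!$ because $\Theta$ is a principal polarization, the goal of the main step is to prove that $D$ is an integral curve with $[D]=\theta^{g-1}/(g-1)!$. Granting this, Matsusaka's criterion produces a smooth curve $C$, an isomorphism $X\cong\Jac(C)$ carrying $\Theta$ to the theta divisor and $D$ to a translate of the Abel--Jacobi image, which is exactly the assertion that $(X,\Theta,D)$ is a Jacobian triple.

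I would organize the minimal-class step around the addition maps. For $k\geq 1$ let $a_k\colon \Sym^{k}D\to X$ send $(x_1,\dots,x_k)$ to $x_1+\cdots+x_k$ and let $W_k=a_k(\Sym^kD)$, so that $W_1=D$ and $W_{k+1}=W_k+D$. If some $W_k$ of dimension $<g$ satisfied $\dim W_{k+1}=\dim W_k$, then $W_k=W_k+D$, so $W_k$ would be invariant under translation by $D-D$ and hence by all of $X$ (as $D$ generates), forcing $W_k=X$, a contradiction. Hence the dimensions increase by one at each step, $\dim W_k=\min(k,g)$ and $W_g=X$; in particular $W_{g-1}$ is a divisor. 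The same circle of ideas, applied to the subvarieties generated by the components of $D$, is what I would use to reduce to $D$ reduced and irreducible, a reduction I expect to be intertwined with the class computation rather than independent of it.

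The heart of the proof is to identify this divisor: I would show that $a_{g-1}\colon\Sym^{g-1}D\to W_{g-1}$ is birational and that $[W_{g-1}]=\theta$ up to algebraic equivalence and translation, so that $W_{g-1}$ is a theta divisor. This is precisely where the hypothesis $D\cdot\Theta=g$ is consumed: it calibrates the pushforward class $(a_{g-1})_*[\Sym^{g-1}D]$ against $\theta$, through the compatibility of the sum maps with intersection against $\Theta$ --- equivalently, through the Pontryagin product and the Fourier--Mukai duality on $X$, both of which are available over an arbitrary field. From $W_{g-1}$ being an effective theta divisor one reads off $h^0(X,\O_X(W_{g-1}))=1$, hence that $\Theta$ is genuinely a principal polarization, and dually that $[D]=\theta^{g-1}/(g-1)!$.

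The step I expect to be the real obstacle is this class computation. Controlling the degree of $a_{g-1}$ and proving $[W_{g-1}]=\theta$ without Hodge theory forces one to convert both the effectivity of the cycle and the generating hypothesis into positivity statements in the numerical intersection ring that remain valid in characteristic $p$; this, rather than the appeal to Matsusaka, is the genuinely new content over an arbitrary field. A secondary point to verify is that Matsusaka's reconstruction of the curve from the theta divisor --- via the Abel map on $\Sym^{g-1}D$ and the structure of the singular locus of $\Theta$ --- carries over verbatim to the field $k$.
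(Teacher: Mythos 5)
The paper does not prove this statement at all: Theorem \ref{Coll} is imported verbatim from Collino's article and used as a black box (its only role is to identify the $k$-points of $\Hilb_{C/J}$ with translates of $C$), so there is no internal argument to compare yours with. Judged on its own terms, your proposal is a strategy outline rather than a proof. The parts you do carry out are correct and standard: the reduction to Matsusaka's minimal-class criterion (whose original proof is algebraic and valid over any algebraically closed field) is the right skeleton, and the dimension count $\dim W_k=\min(k,g)$ via ``$W_k=W_k+D$ forces $W_k=X$ since $D$ generates'' is fine. But the entire content of the theorem is concentrated in the step you explicitly defer: showing that the two numerical hypotheses ($D$ generates $X$ and $D\cdot\Theta=g$) force $[D]=\theta^{g-1}/(g-1)!$, equivalently that $a_{g-1}$ is birational onto $W_{g-1}$ and $W_{g-1}$ is a theta divisor. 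You name plausible tools (Pontryagin product, Fourier--Mukai duality) but write down no inequality or identity that actually consumes the hypothesis $D\cdot\Theta=g$; as the sketch stands nothing excludes, say, $\Theta$ being a non-principal polarization paired with a curve of non-minimal class. Relatedly, your normalization $\theta^{g}=g!$ ``because $\Theta$ is a principal polarization'' assumes part of the conclusion --- principality of $\Theta$ is an output of the criterion, not an input, as you yourself acknowledge two paragraphs later.

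For what it is worth, Collino's published argument does follow your skeleton (reduce to Matsusaka by establishing the minimal class), but the engine that converts $D\cdot\Theta=g$ into the class statement is a concrete piece of intersection theory on abelian varieties --- essentially the Morikawa--Matsusaka endomorphism of $X$ attached to the pair $(D,\Theta)$ and the attendant numerical inequalities for effective cycles of complementary dimension whose sum map is surjective. Some such explicit mechanism, valid in characteristic $p$, is exactly what is missing from your write-up; until it is supplied, the proposal records the shape of a proof but not a proof.
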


\begin{proofof}{Theorem \ref{LSthm}}
Let $C\ra \Spec k$ be a smooth curve of genus $g$ and fix an Abel--Jacobi map $C\hookrightarrow J$. Consider the normal bundle exact sequence
\[
0\ra T_C\ra T_J|_C\ra N_C\ra 0.
\]
Since we have a canonical identification $T_J|_C = H^1(C,\O_C)\otimes_k\O_C$, the induced cohomology sequence is
\be\label{cohoJac}
0\ra H^1(C,\O_C)\ra H^0(C,N_C)\overset{\partial}{\ra} H^1(C,T_C)\overset{\sigma}{\ra} H^1(C,\O_C)^{\otimes 2}.
\ee
Since $H^0(C,N_C)$ is the tangent space to the Hilbert scheme, and $\dim_k H^1(C,\O_C) = g$, it is clear that $\Hilb_{C/J}$ is smooth of dimension $g$ if and only if $\partial = 0$, if and only if $\sigma$ is injective. The map $\sigma$ factors through the subspace $\Sym^2 H^1(C,\O_C)$, and its dual is the multiplication map
\[
\mu_C\colon \Sym^2 H^0(C,K_C)\ra H^0(C,K_C^2),
\]
where $K_C$ is the canonical line bundle of $C$. For a modern, fully detailed proof of the identification $\sigma^\vee = \mu_C$, we refer the reader to \cite[Theorem 4.3]{Landesman:2019aa}. By a theorem of Max Noether \cite[Chapter III \S~2]{ACGH}, the map $\mu_C$ is surjective if and only if $C$ is non-hyperelliptic (see also \cite{Griffiths1,Andreotti58} for different proofs). If $C$ is hyperelliptic, the quotient $H^0(C,N_C)/H^1(C,\O_C) = \textrm{Im }\partial$ has dimension $g-2$, as shown directly in \cite[Section 2]{OS1} by choosing appropriate bases of differentials. This proves part (i) of Theorem \ref{LSthm}, along with the count $h^0(C,N_C) = 2g-2$ (and the non-reducedness statement) of part (ii).
So in the non-hyperelliptic case, $\Hilb_{C/J}$ is smooth of dimension $g$.

To finish the proof of part (ii), suppose $C$ is hyperelliptic, and let $D\subset J$ be a closed $1$-dimensional $k$-subscheme defining a point of $\Hilb_{C/J}$. Then $D$ is represented by the \emph{minimal cohomology class}
\[
\frac{\Theta_C^{g-1}}{(g-1)!}
\]
on $J$. This implies at once that $D$ generates $J$, and that $D\cdot \Theta_C = g$. Therefore, by Theorem \ref{Coll}, $(\Pic^0D,\Theta_D)$ and $(J,\Theta_C)$ are isomorphic as principally polarised abelian varieties. By Torelli's theorem, this implies (using also that $C$ is hyperelliptic) that $D$ is a translate of $C$. Thus $\Hilb_{C/J}$ is irreducible of dimension $g$, and its $k$-points coincide with those of $J$. The result follows.
\end{proofof}

\begin{cor}\label{cor:non_hyperelliptic_iso}
Let $J$ be the Jacobian of a non-hyperelliptic curve $C$. Then the family of translations of $C$ inside $J$ induces an isomorphism
\[
J \,\,\widetilde{\to}\,\, \Hilb_{C/J}.
\]
\end{cor}

\begin{proof}
The natural morphism $h\colon J \to \Hilb_{C/J}$ is proper (since $J$ is proper and the Hilbert scheme is proper, hence separated), 
injective on points and tangent spaces --- since the tangent map at $0 \in J$ is the map $\dd h\colon H^1(C,\O_C)\hookrightarrow H^0(C,N_C)$ in the sequence \eqref{cohoJac}.
Thus $h$ is a closed immersion, in particular it is unramified. 
However, the proof of Theorem \ref{LSthm} shows that $h\colon J \to \Hilb_{C/J}$ is bijective and, since $C$ is non-hyperelliptic, $\dd h$ is an isomorphism. Thus $h$ is an isomorphism.
\end{proof}


\begin{remark}
If $C$ is a generic complex curve of genus at least $3$, its $1$-cycle on $J$ is not algebraically equivalent to the cycle of $-C$ by a famous theorem of Ceresa \cite{Ceresa1}. Here $-C$ is the image of $C$ under the automorphism $-1\colon J\ra J$. Therefore the Hilbert scheme $\Hilb_J$ contains another component $\Hilb_{-C/J}$, disjoint from $\Hilb_{C/J}$ and still isomorphic to $J$.
\end{remark}

\subsection{Torelli problems}\label{gnagnagne}

Consider the Torelli morphism
\[
\tau_g\colon \mathcal M_g\ra \mathcal A_g
\]
from the stack of nonsingular curves of genus $g$ to the stack of principally polarised abelian varieties, sending a curve to its (canonically polarised) Jacobian. The \emph{infinitesimal Torelli problem} asks whether the Torelli morphism is an immersion. It is well known that $\tau_g$ is ramified along the hyperelliptic locus: this is again Noether's theorem, stating that $\mu_C$, the \emph{codifferential} of $\tau_g$ at $[C]\in \mathcal M_g$, is not surjective. So, even though $\tau_g$ is injective on geometric points by Torelli's theorem, it is not an immersion.

To sum up, we have the following. Let $C$ be an arbitrary smooth curve of genus $g\geq 3$, and let $J$ be its Jacobian. Then the following conditions are equivalent:
\bitem
\item [(i)] $C$ is hyperelliptic,
\item [(ii)] $\Hilb_{C/J}$ is singular at $[\mathsf{aj}\colon C\hookrightarrow J]$,
\item [(iii)] the embedded deformations of $C$ into $J$ are obstructed,
\item [(iv)] $\tau_g\colon \mathcal M_g\ra \mathcal A_g$ is ramified at $[C]$,
\item [(v)] infinitesimal Torelli fails at $C$.
\eitem

\medskip
The \emph{local Torelli problem} for curves, studied by Oort and Steenbrink in \cite{OS1}, asks whether the morphism
\[
t_g\colon M_g\ra A_g
\]
between the coarse moduli spaces is an immersion. These schemes do not represent the corresponding moduli functors, so the local structure of $t_g$ is not (directly) linked with deformation theory of curves and their Jacobians. However, introducing suitable level structures, one replaces the normal varieties $M_g$ and $A_g$ with smooth varieties 
\[
M_g^{(n)}, \quad A_g^{(n)}
\]
that are \emph{fine} moduli spaces for the corresponding moduli problem, and are \'etale over $\mathcal M_g$ and $\mathcal A_g$, respectively.

Let $p\geq 0$ be the characteristic of the base field. Oort and Steenbrink show that $t_g$ is an immersion if $p = 0$. The answer to the local Torelli problem is also affirmative if $p > 2$, at almost all points of $M_g$. More precisely, $t_g$ is an immersion at those points in $M_g$ representing curves $C$ such that $\Aut C$ has no elements of order $p$ \cite[Cor.~3.2]{OS1}. Finally, $t_g$ is \emph{not} an immersion if $p=2$ and $g\geq 5$ \cite[Cor.~5.3]{OS1}.

\section{Moduli spaces with level structures}
In this section we introduce the moduli spaces of curves and abelian varieties we will be working with throughout.

\subsection{Level structures}
Let $S$ be a scheme. An abelian scheme over $S$ is a group scheme $X\ra S$ which is smooth and proper and has geometrically connected fibres. We let $\widehat X\ra S$ denote the dual abelian scheme. A polarisation on $X\ra S$ is an $S$-morphism $\lambda\colon X\ra \widehat X$ such that its restriction to every geometric point $s\in S$ is of the form 
\[
\phi_{\mathscr L}\colon X_s\ra \widehat X_s, \quad x\mapsto \mathsf t_{x}^\ast \mathscr L\otimes \mathscr L^\vee,
\]
for some ample line bundle $\mathscr L$ on $X_s$. Here and in what follows, $\mathsf t_x$ is the translation $y\mapsto x+y$ by the element $x\in X_s$. We say $\lambda$ is \emph{principal} if it is an isomorphism.

Fix an integer $n>0$ and an abelian scheme $X\ra S$ of relative dimension $g$. Multiplication by $n$ is an $S$-morphism of group schemes
\[
[n] \colon X\ra X,
\]
and we denote its kernel by $X_n$. Assuming $n$ is not divisible by $p$, we have that $X_n$ is an \'etale group scheme over $S$, locally isomorphic in the \'etale topology to the constant group scheme $(\Z/n\Z)^{2g}$. One has $\widehat X_n = X_n^D$, where the superscript $D$ denotes the Cartier dual of a finite group scheme. Then any principal polarisation $\lambda$ on $X$ induces a skew-symmetric bilinear form
\[
E_n \colon X_n\times_S X_n\xrightarrow{\textrm{id}\times \lambda} X_n\times_S X_n^D\xrightarrow{e_n} \mu_{n},
\]
where $e_n$ is the Weil pairing. The group $\Z/n\Z$ is Cartier dual to $\mu_n$. We endow $(\Z/n\Z)^{g}\,\widetilde{\ra}\,\mu_n^g$ with the standard symplectic structure, given by the $2g\times 2g$ matrix
\[
\begin{pmatrix}
0 & \mathbb 1_g \\
-\mathbb 1_g & 0
\end{pmatrix}.
\]

\begin{definition}[\cite{OS1}]
A (symplectic) level-$n$ structure on a principally polarised abelian scheme $(X/S,\lambda)$ is a symplectic isomorphism 
\[
\alpha\colon (X_n,E_n)~\widetilde{\ra}~ (\Z/n\Z)^{2g}.
\]
A level-$n$ structure on a smooth proper curve $\mathcal C\ra S$ is a level structure on its Jacobian $\Pic^0(\mathcal C/S)\ra S$.
\end{definition}

Curves with level structure are represented by pairs $(C,\alpha)$. We consider $(C,\alpha)$ and $(C',\alpha')$ as being isomorphic if there is an isomorphism $u\colon C\,\widetilde{\ra}\,C'$ such that the induced isomorphism $J(u)\colon J'\,\widetilde{\ra}\,J$ between the Jacobians takes $\alpha'$ to $\alpha$. An isomorphism between $(X,\lambda,\alpha)$ and $(X',\lambda',\alpha')$ is an isomorphism $(X',\lambda')\,\widetilde{\ra}\,(X,\lambda)$ of principally polarised abelian schemes, taking $\alpha'$ to $\alpha$.

\begin{remark}\label{notisom382}
If $C$ is a curve of genus $g\geq 3$ with trivial automorphism group, and $\alpha$ is a level structure on $C$, then $(C,\alpha)$ is not isomorphic to $(C,-\alpha)$. On the other hand, if $J$ denotes the Jacobian of $C$, one has that $(J,\Theta_C,\alpha)$ and $(J,\Theta_C,-\alpha)$ are isomorphic, because the automorphism $-1\colon J\ra J$, defined globally on $J$, identifies the two pairs.
\end{remark}

\subsubsection{Choice of level}\label{assum}
As indicated by Theorem \ref{thm:fine_moduli} below, moduli spaces of curves and abelian varieties with level structure are well behaved when the condition $(p,n)=1$ is met.
For later purposes, we need to strengthen the condition $(p,n)=1$.
Note that $p = \char\, k$ is fixed, as well as the genus $g$. However, we are free to choose $n\geq 3$, and the condition we require
is that the order of the symplectic group
\[
\lvert \textrm{Sp}(2g,\Z/n\Z)\rvert = n^{g^2}\cdot \prod_{i=1}^g\,(n^{2i}-1)
\]
is not divisible by $p$. In particular, this implies $(p,n) = 1$. From now on, 
\begin{equation}\label{choice_of_n}
n\textrm{ is fixed in such a way that }p\textrm{ does not divide }\lvert \textrm{Sp}(2g,\Z/n\Z)\rvert.
\end{equation}
This condition implies that the symplectic group $\textrm{Sp}(2g,\Z/n\Z)$ acts freely and transitively on the set of symplectic level-$n$ structures on a smooth curve defined over $k$. This will be used in the proof of Lemma \ref{lemma:forget}.

\subsection{Moduli spaces}
Let $\mathscr M_g^{(n)}$ be the functor $\Sch_k^{\op}\ra \Sets$ sending a $k$-scheme $S$ to the set of $S$-isomorphism classes of curves of genus $g$ with level-$n$ structure. Similarly, let $\mathscr A_g^{(n)}$ be the functor sending $S$ to the set of $S$-isomorphism classes of principally polarised abelian schemes of relative dimension $g$ over $S$ equipped with a level-$n$ structure.

\begin{teo}\label{thm:fine_moduli}
If $n\geq 3$ and $(p,n) = 1$, the functors $\mathscr M_g^{(n)}$ and $\mathscr A_g^{(n)}$ are represented by smooth quasi-projective varieties $M_g^{(n)}$ and $A_g^{(n)}$ of dimensions $3g-3$ and $g(g+1)/2$ respectively.
\end{teo}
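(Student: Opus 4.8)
The plan is to deduce everything from two facts: that a level-$n$ structure with $n\geq 3$ rigidifies the moduli problem (it kills all automorphisms), so the functors become representable; and that the resulting schemes are finite étale covers of the smooth stacks $\mathcal M_g$ and $\mathcal A_g$, from which smoothness and the dimension count descend.

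First I would establish rigidity. The key is a lemma of Serre (a consequence of Minkowski's theorem): a finite-order matrix in $\GL(2g,\Z)$ that is congruent to $\mathbb 1_{2g}$ modulo $n$ equals $\mathbb 1_{2g}$ whenever $n\geq 3$. Applied here, an automorphism of $(X,\lambda)$ preserving the level-$n$ structure $\alpha$ acts trivially on $X_n\isom(\Z/n\Z)^{2g}$, hence induces such a matrix on $H_1$ (or on a Tate module); being of finite order because it preserves the polarisation, it must be the identity. Thus $(X,\lambda,\alpha)$ has no nontrivial automorphisms. For curves I would invoke the Torelli injection $\Aut C\hookrightarrow \Aut(J,\Theta_C)$, valid for $g\geq 3$, which propagates the rigidity: a pair $(C,\alpha)$ likewise admits only the trivial automorphism.

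With automorphisms eliminated, representability is a matter of Mumford's geometric invariant theory, which I would cite rather than reprove. For $\mathscr A_g^{(n)}$ one embeds $(X,\lambda,\alpha)$ into a fixed projective space via a sufficiently high power of $\lambda$, parametrises the resulting closed subschemes by a locally closed subscheme of a Hilbert scheme, and forms the geometric quotient by the now-free projective linear action; this produces the quasi-projective variety $A_g^{(n)}$ representing $\mathscr A_g^{(n)}$. The same construction applied to pluricanonically embedded curves, together with the Torelli rigidification above, yields $M_g^{(n)}$. This GIT quotient is the genuine technical heart of the statement; everything else is formal.

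Finally I would transfer smoothness and dimension by étale descent. Under Assumption \ref{assum} the order of $\textrm{Sp}(2g,\Z/n\Z)$ is prime to $p$, so $X_n$ is étale and the forgetful maps $M_g^{(n)}\ra \mathcal M_g$ and $A_g^{(n)}\ra \mathcal A_g$ are finite étale, being torsors under $\textrm{Sp}(2g,\Z/n\Z)$. Since the stacks $\mathcal M_g$ and $\mathcal A_g$ are smooth — with unobstructed tangent spaces $H^1(C,T_C)$ and $\Sym^2 H^1(X,\O_X)$ of dimensions $3g-3$ and $g(g+1)/2$ respectively — both smoothness and these dimensions pass to the étale covers, giving the claimed $\dim M_g^{(n)}=3g-3$ and $\dim A_g^{(n)}=g(g+1)/2$.
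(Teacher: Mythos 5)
Your proposal is correct in outline, but it takes a different route from the paper, which simply cites the literature: representability of $\mathscr M_g^{(n)}$ is attributed to \cite{Popp} and that of $\mathscr A_g^{(n)}$ to \cite[Theorem 7.9]{Mumford65}, with no argument given. What you have written is essentially a reconstruction of the skeleton of those references (Serre's rigidity lemma for $n\geq 3$, propagated to curves via the injection $\Aut C\hookrightarrow\Aut(J,\Theta_C)$, then GIT for representability and quasi-projectivity), plus a modern descent step for smoothness and dimension; this is a legitimate and arguably more informative proof than a bare citation, and the GIT core is reasonably left as a black box since that is exactly what the paper does too. Two small cautions. First, you invoke Assumption \ref{assum} (that $|\mathrm{Sp}(2g,\Z/n\Z)|$ is prime to $p$) to get \'etaleness of the forgetful maps, but the theorem is stated under the weaker hypothesis $(p,n)=1$; in fact $(p,n)=1$ already suffices, since $M_g^{(n)}$ is the scheme of symplectic trivialisations of the finite \'etale group scheme $\Pic^0(\mathcal C/S)[n]$ and such an Isom-scheme is automatically finite \'etale over the base --- so you should justify the \'etaleness that way rather than through the order of the group. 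Second, be aware that the paper's Lemma \ref{lemma:forget} proves \'etaleness of these same forgetful maps \emph{using} the smoothness of $M_g^{(n)}$ and $A_g^{(n)}$ (via miracle flatness), i.e.\ in the logical direction opposite to yours; your argument is not circular provided you establish \'etaleness independently as above, but if you were to quote Lemma \ref{lemma:forget} at that step it would be.
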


\begin{proof}
For the statement about $\mathscr M_g^{(n)}$ we refer to \cite{Popp}, whereas the one about $\mathscr A_g^{(n)}$ is \cite[Theorem 7.9]{Mumford65}.
\end{proof}

Consider the morphism
\be\label{jn22}
j_n \colon M_g^{(n)}\ra A^{(n)}_g
\ee
sending a curve with level structure to its Jacobian, as usual principally polarised by the Theta divisor. The map $j_n$ is generically of degree two onto its image, essentially because of Remark \ref{notisom382}.
To link it back to $t_g\colon M_g\ra A_g$, Oort and Steenbrink form the geometric quotient
\[
V^{(n)} = M_g^{(n)}/\Sigma,
\]
where 
\be\label{sigmainv}
\Sigma\colon M_g^{(n)}\ra M_g^{(n)}
\ee
is the involution sending $[D,\beta]\mapsto [D,-\beta]$. Note that $\Sigma$ is the identity if $g \leq 2$.
The map $j_n$ factors through a morphism
\[
\iota \colon V^{(n)}\ra A^{(n)}_g,
\]
which turns out to be injective on geometric points \cite[Lemma 1.11]{OS1}. In fact, we need the following stronger statement.

\begin{teo}[{\cite[Theorem 3.1]{OS1}}]\label{thm:OSemb}
If $g\geq 2$ and $\mathrm{char}\, k \neq 2$ then $\iota$ is an immersion.
\end{teo}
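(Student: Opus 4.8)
The plan is to prove that $\iota$ is an immersion by checking two things: that it is injective on geometric points, and that it is unramified (that is, $\dd\iota$ is injective on every tangent space). The first is exactly \cite[Lemma 1.11]{OS1}. For the second, the key is to exploit the factorisation $j_n=\iota\circ q$, where $q\colon M_g^{(n)}\ra V^{(n)}$ is the quotient by the involution $\Sigma$ of \eqref{sigmainv}. Over the non-hyperelliptic locus there is nothing to do: there $\mu_C$ is surjective by Max Noether (cf.\ the proof of Theorem \ref{LSthm}(i)), so the map $\sigma$ of \eqref{cohoJac} is injective and $j_n$ is unramified; moreover a general curve has trivial automorphisms and satisfies $[C,\alpha]\not\cong[C,-\alpha]$ by Remark \ref{notisom382}, so $\Sigma$ acts freely and $q$ is \'etale by Assumption \ref{assum}. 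Since $q^\ast\Omega_{V^{(n)}/A_g^{(n)}}=\Omega_{M_g^{(n)}/A_g^{(n)}}=0$ and $q$ is \'etale surjective, $\iota$ is unramified away from the hyperelliptic locus.

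It remains to analyse the hyperelliptic locus. First I would identify the fixed locus of $\Sigma$: the point $[C,\alpha]$ is fixed exactly when some $u\in\Aut C$ induces $-1$ on the $n$-torsion $J_n$, and since $n\geq 3$ the rigidity of torsion points forces $J(u)=-1$ on all of $J$; inspecting the induced action on $H^0(C,K_C)$ and on the canonical image shows this occurs precisely when $C$ is hyperelliptic, with $u$ the hyperelliptic involution $\iota_C$. Working in $\char k\neq 2$ I then linearise $\Sigma$ at such a fixed point: one checks that $\dd\Sigma$ coincides with $\iota_C^\ast$ on $T_{[C,\alpha]}M_g^{(n)}=H^1(C,T_C)$, yielding an eigenspace splitting $H^1(C,T_C)=H^1(C,T_C)^+\oplus H^1(C,T_C)^-$. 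By Serre duality this is dual to the decomposition of $H^0(C,K_C^2)$ into $\iota_C$-invariants (the image of $\mu_C$, dimension $2g-1$) and anti-invariants (the cokernel of $\mu_C$, dimension $g-2$); hence $\dim H^1(C,T_C)^+=2g-1$, $\dim H^1(C,T_C)^-=g-2$, and $\ker\sigma=H^1(C,T_C)^-$. Linearising the quotient in characteristic $\neq 2$ gives
\[
T_{q[C,\alpha]}V^{(n)} \cong H^1(C,T_C)^+\oplus \Sym^2 H^1(C,T_C)^-,
\]
of dimension $(2g-1)+\binom{g-1}{2}=g(g+1)/2=\dim A_g^{(n)}$.

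The heart of the matter is to show that $\dd\iota$ is injective -- equivalently, since the dimensions agree, an isomorphism -- at each hyperelliptic point. On the summand $H^1(C,T_C)^+$ the map $\dd\iota$ is just $\sigma|_+$, which is injective because $\ker\sigma=H^1(C,T_C)^-$, with image of dimension $2g-1$ in $\Sym^2 H^1(C,\O_C)$. On the summand $\Sym^2 H^1(C,T_C)^-$ the content is the second fundamental form of the period map in the ramification directions: choosing linear coordinates $z_{ab}$ $(1\leq a\leq b\leq g)$ at $J\in A_g^{(n)}$, the pullbacks $j_n^\ast z_{ab}$ are $\Sigma$-invariant and, because $\sigma$ kills $H^1(C,T_C)^-$, have vanishing linear part in the anti-invariant variables; their Hessians $Q_{ab}\in\Sym^2(H^1(C,T_C)^-)^\ast$ in those variables compute $\dd\iota$ on this summand. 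Thus injectivity of $\dd\iota$ reduces to the assertion that the $Q_{ab}$ identify $\Sym^2 H^1(C,T_C)^-$ with a complement of $\im(\sigma|_+)$ in $\Sym^2 H^1(C,\O_C)$. I expect this nondegeneracy to be the main obstacle, and to settle it by the same explicit hyperelliptic bases that underlie Theorem \ref{LSthm}(ii): with $C\colon y^2=f(x)$ one has $H^0(C,K_C)=\langle x^{i-1}\,\dd x/y\rangle$, the cokernel of $\mu_C$ is spanned by the anti-invariant quadratic differentials $x^{k}\,\dd x^2/y^3$, and one computes the second-order variation of periods along these directions directly, verifying that the resulting quadrics are nondegenerate.

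Granting that $\dd\iota$ is injective at every point, $\iota$ is unramified everywhere; combined with injectivity on points it is universally injective (radicial) and unramified, hence a monomorphism. Finally, over the algebraically closed field $k$ unramifiedness forces $\iota^\ast$ to be surjective on completed local rings at each point, so $\iota$ is formally a closed immersion there; together with global injectivity this shows by standard arguments that $\iota$ is a locally closed immersion, as claimed.
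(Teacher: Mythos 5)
First, a point of reference: the paper does not prove this statement at all --- it is quoted verbatim from \cite[Theorem 3.1]{OS1} and used as a black box, so there is no internal proof to compare against. Your reconstruction does follow the general shape of Oort--Steenbrink's actual argument: identifying the fixed locus of $\Sigma$ with the hyperelliptic locus via rigidity of $n$-torsion for $n\geq 3$, splitting $H^1(C,T_C)$ into $\pm$-eigenspaces of the hyperelliptic involution of dimensions $2g-1$ and $g-2$, reading off $T_yV^{(n)}\cong H^1(C,T_C)^+\oplus\Sym^2 H^1(C,T_C)^-$ from the local model \eqref{localstructure}, and reducing injectivity of $\dd\iota$ to a statement about the quadratic parts $Q_{ab}$ of the $\Sigma$-invariant functions $j_n^\ast z_{ab}$ in the anti-invariant coordinates. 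All the dimension counts check out. But the decisive step --- that the $Q_{ab}$, restricted to $\ker\mu_C\subset\Sym^2 H^0(C,K_C)$, surject onto $\Sym^2\bigl(H^1(C,T_C)^-\bigr)^\ast$ --- is precisely the computational content of \cite[\S\S 2--3]{OS1}; it is where the hypothesis $\mathrm{char}\,k\neq 2$ genuinely enters (the conclusion fails in characteristic $2$ for $g\geq 5$, as the paper itself recalls), and you only say you ``expect'' to verify it with explicit hyperelliptic bases. That verification \emph{is} the theorem; what you have is a correct reduction, not a proof.

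Second, your closing deduction ``universally injective and unramified, hence an immersion'' is false as a general principle, and surjectivity on completed local rings does not repair it. The normalization of a nodal plane cubic, restricted to the complement of one of the two preimages of the node, is injective on points, unramified, surjective on completed local rings at every point, and a homeomorphism onto its closed image, yet it is not an immersion: $\O_Y\to f_\ast\O_X$ fails to be surjective at the node because the stalk of the pushforward contains functions with poles along the deleted branch. Formal surjectivity at each point is an \'etale-local condition on the source, while being an immersion is not. To conclude one needs a genuinely global input --- for instance that $\iota(V^{(n)})$ is open in the closure of the Torelli locus and that $\iota$ is proper (finite) onto this locally closed image, so that one may invoke ``proper monomorphism $=$ closed immersion'' --- and this requires a separate argument. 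So even granting the nondegeneracy of the $Q_{ab}$, the final paragraph needs to be replaced rather than tightened.
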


Oort and Steenbrink use this result crucially to solve the local Torelli problem as we recalled in Section \ref{gnagnagne}.
For us, it is not important to have the statement of local Torelli (which strictly speaking only holds globally in characteristic $0$): all we need in our argument is Theorem \ref{thm:OSemb}, which is why we require the base field $k$ to have characteristic $p\neq 2$.

The following result was proven in \cite[Prop.~5.8]{DMstablecurves} in greater generality. We give a short proof here for the sake of completeness.

\begin{lemma}\label{lemma:forget}
The maps $\varphi\colon M_g^{(n)}\ra \mathcal M_g$ and $\psi\colon A_g^{(n)}\ra \mathcal A_g$ forgetting the level structure are \'etale.
\end{lemma}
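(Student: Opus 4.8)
The plan is to treat both forgetful maps uniformly by exhibiting each as a torsor, over the relevant moduli stack, under the finite group $\textrm{Sp}(2g,\Z/n\Z)$ regarded as a constant (hence étale) group scheme, for which étaleness is automatic. I would run the argument in full for $\psi$ and then transport it to $\varphi$. Over $\mathcal A_g$ there is a universal principally polarised Abelian scheme $(\mathcal X,\lambda)$. Since $(p,n)=1$, multiplication by $n$ is étale, so the $n$-torsion $\mathcal X_n\ra \mathcal A_g$ is a finite étale group scheme of rank $n^{2g}$, carrying the nondegenerate Weil pairing $E_n$ induced by $\lambda$ as constructed above. A level-$n$ structure on a test object is precisely a symplectic trivialisation of $(\mathcal X_n,E_n)$, so by its universal property $A_g^{(n)}$ represents the relative scheme of symplectic isomorphisms
\[
\underline{\mathrm{Isom}}^{\mathrm{symp}}\bigl(\mathcal X_n,\,(\Z/n\Z)^{2g}\bigr)\lra \mathcal A_g,
\]
where $(\Z/n\Z)^{2g}$ is the constant symplectic group scheme with its standard form.

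The key geometric input is that the Isom scheme of two finite étale $S$-schemes is itself finite étale over $S$, and imposing the symplectic condition cuts out a finite étale subscheme. Moreover this subscheme is a torsor under $\underline{\mathrm{Aut}}^{\mathrm{symp}}\bigl((\Z/n\Z)^{2g}\bigr)=\underline{\textrm{Sp}}(2g,\Z/n\Z)$, since over any geometric point the Weil pairing is a perfect alternating pairing, hence isomorphic to the standard one, and such an isomorphism spreads out to an étale neighbourhood; this gives local triviality. Étaleness is then formal: étale-locally on $\mathcal A_g$ the torsor becomes trivial, of the form $\coprod \mathcal A_g$ indexed by the finite group $\textrm{Sp}(2g,\Z/n\Z)$, which is visibly finite étale, and one descends. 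The identical argument, run over $\mathcal M_g$ with $(\mathcal X,\lambda)$ replaced by the Jacobian $\Pic^0(\mathcal C/\mathcal M_g)$ of the universal curve and its Weil pairing, shows that $\varphi$ is étale.

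I expect the only genuine subtlety to be the identification of the moduli space with the relative Isom scheme, that is, the matching of functors of points. This is where one uses $n\geq 3$, so that objects with level structure are rigid, making $M_g^{(n)}$ and $A_g^{(n)}$ schemes and the forgetful morphisms representable; and where Assumption~\ref{assum} guarantees that $\textrm{Sp}(2g,\Z/n\Z)$ acts freely and transitively on the fibres, so that the level structures form exactly a torsor rather than merely a pseudo-torsor. Once this torsor description is in place, the étaleness itself is the easy part of the argument, and it is insensitive to whether the torsor is split.
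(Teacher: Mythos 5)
Your proof is correct, but it takes a genuinely different route from the paper's. The paper argues via an atlas $U\ra \mathcal M_g$: the pullback $V\ra U$ is a morphism of smooth varieties with zero-dimensional fibres, hence flat by miracle flatness, and the geometric fibres are reduced by Assumption~\ref{assum} (that $p\nmid |\mathrm{Sp}(2g,\Z/n\Z)|$), so the map is smooth of relative dimension zero. Your torsor/Isom-scheme argument buys more with less: it gives finiteness for free, avoids any appeal to smoothness of the moduli spaces or to miracle flatness, and — contrary to what you say at the end — does not need Assumption~\ref{assum} at all. The simply transitive action of $\mathrm{Sp}(2g,\Z/n\Z)$ on symplectic trivialisations is automatic (post-composition), and surjectivity of the Isom scheme onto the base follows from the existence of a symplectic basis for the perfect alternating Weil pairing over each geometric point; only $(p,n)=1$ (so the $n$-torsion is étale) and $n\geq 3$ (Serre's lemma, giving rigidity, so that the Isom stack is a sheaf agreeing with the functor of isomorphism classes that $M_g^{(n)}$ and $A_g^{(n)}$ represent) enter. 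The cost is exactly the identification you flag, where all the content of your proof is concentrated; it is standard but should be stated as the key step, since once it is granted the étaleness is, as you say, formal. The paper's proof is the more elementary one given the known smoothness of the spaces, and is the place where Assumption~\ref{assum} is genuinely invoked.
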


\begin{proof}
We start by showing that $\varphi$ is flat. Choose an atlas for $\mathcal M_g$, that is, an \'etale surjective map $a\colon U\ra \mathcal M_g$ from a scheme. Form the fibre square
\[
\begin{tikzcd}
V\MySymb{dr}\arrow{r}{b}\arrow[swap]{d}{} & M_g^{(n)} \arrow{d}{\varphi} \\
U\arrow[swap]{r}{a} & \mathcal M_g
\end{tikzcd}
\]
and pick a point $u\in U$, with image $y = a(u)\in \mathcal M_g$. The fibre $V_u\subset V$ is contained in $b^{-1}\varphi^{-1}(y)$, which is \'etale over $\varphi^{-1}(y)$ because $b$ is \'etale. In particular, since $\varphi^{-1}(y)$ is finite, the same is true for $V_u$. Therefore $V\ra U$ is a map of smooth varieties with fibres of the same dimension (zero); by ``miracle flatness'' \cite[Prop.~15.4.2]{MR0217086}, it is flat; therefore $\varphi$ is flat. On the other hand, the geometric fibres of $\varphi$ are the symplectic groups $\textrm{Sp}(2g,\Z/n\Z)$, and they are reduced by our choice of $n$ (cf.~\eqref{choice_of_n} in Section~\ref{assum}). Hence $\varphi$ is smooth of relative dimension zero, that is, \'etale. The same argument applies to the map $\psi$, with the symplectic group replaced by $\textrm{Sp}(2g,\Z/n\Z)/\pm \mathbb 1$.
\end{proof}

\begin{remark}
The maps $M_g^{(n)}\ra M_g$ and $A_g^{(n)}\ra A_g$ down to the coarse moduli schemes are still finite Galois covers, but they are not \'etale.
\end{remark}

By Lemma \ref{lemma:forget}, we can identity the tangent space to a point $[C,\alpha]\in M_g^{(n)}$ with the 
tangent space to its image $[C]\in \mathcal M_g$ under $\varphi$, and similarly on the abelian variety side. Moreover, the cartesian diagram
\be\label{square734}
\begin{tikzcd}
M_g^{(n)}\MySymb{dr}\arrow{r}{j_n}\arrow[swap]{d}{\varphi} & A_g^{(n)}\arrow{d}{\psi}\\
\mathcal M_g\arrow[swap]{r}{\tau_g} & \mathcal A_g
\end{tikzcd}
\ee
allows us to identify the map 
\[
\sigma\colon H^1(C,T_C)\ra \Sym^2H^1(C,\O_C),
\]
already appeared in \eqref{cohoJac}, with the tangent map of $j_n$ at a point $[C,\alpha]$.
As we already mentioned, in \cite[Section 2]{OS1} it is shown that if $C$ is hyperelliptic the kernel of $\sigma$ has dimension $g-2$. 

\section{Proof of the main theorem}

\subsection{Proof of Theorem \ref{thm:thm1_hilb}}
Let $C$ be a hyperelliptic curve of genus $g\geq 3$ and let $J$ be its Jacobian. Fix an Abel--Jacobi embedding $C\hookrightarrow J$ and let
\[
H \defeq \Hilb_{C/J}
\]
be the Hilbert scheme component containing such embedding as a point.
Let 
\[
\begin{tikzcd}
\mathcal Z\arrow[hook]{r}{\iota}\arrow{d} & H\times J\arrow{dl}{\textrm{pr}_1} \\
H &
\end{tikzcd}
\]
be the universal family over the Hilbert scheme.

\begin{lemma}\label{lemmaisoab}
The restriction morphism 
\[
\iota^\ast\colon \Pic^0(H\times J/H)\ra \Pic^0(\mathcal Z/H) 
\]
is an isomorphism of abelian schemes over $H$.
\end{lemma}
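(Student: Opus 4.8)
The plan is to exhibit $\eta^\ast$ as a homomorphism of abelian schemes over $H$ and then to prove that it is an isomorphism by reducing the question to the geometric fibres. First I would check that both sides really are abelian schemes over $H$. The projection $J\times H\ra H$ is the constant family with fibre the abelian variety $J$, so $\Pic^0(J\times H/H)=\widehat J\times H$ is the constant abelian scheme with fibre $\widehat J$. On the other side, $Z_g\ra H$ is proper and flat, being the universal family (its fibres all have the Hilbert polynomial of $C$), and its geometric fibres are the translates of $C$, hence smooth, geometrically connected curves of genus $g$. Thus $Z_g\ra H$ is a smooth proper relative curve, and $\Pic^0(Z_g/H)$ is its relative Jacobian, an abelian scheme over $H$ of relative dimension $g$. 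Pullback of line bundles along the closed immersion $\eta$ is functorial, preserves the group laws and the identity sections, and carries algebraically trivial bundles to algebraically trivial ones, so it lands in the identity component; therefore $\eta^\ast$ is a homomorphism of abelian schemes over $H$, both of relative dimension $g$.

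Next I would reduce the statement to a fibrewise one, using the following standard fact: a homomorphism $f$ of abelian schemes over a base $S$ that restricts to an isomorphism over every point $s\in S$ is automatically a global isomorphism. Indeed, $f$ is proper (as a map between a proper and a separated $S$-scheme) and quasi-finite (the fibres $f_s$ are isomorphisms, hence have trivial kernel), so $f$ is finite; applying the fibrewise criterion of flatness \cite[11.3.10]{MR0217086} to the two structure morphisms to $S$ together with the fibrewise flatness of each $f_s$ shows that $f$ is flat. Being finite and flat with fibres of degree one, the pushforward of the structure sheaf is invertible on the target and the natural map to it is a fibrewise isomorphism of invertible sheaves, hence an isomorphism; thus $f$ is an isomorphism. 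Crucially, the fibres occurring here are taken over the residue fields of the scheme-theoretic points of $H$, which are fields, so the non-reducedness of $H$ plays no role: it suffices to verify that $\eta^\ast$ is an isomorphism over each geometric point of $H$.

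Finally I would carry out the fibrewise check. Fix a geometric point $h$ of $H$; its image in $H_{\red}\cong J$ is some $a\in J$, the fibre of $Z_g$ over $h$ is the translate $D=T_a(C)$, and $\eta_h^\ast$ is the restriction map $\Pic^0(J)\ra\Pic^0(D)$. Since translation by $a$ acts as the identity on $\Pic^0(J)$ and carries the Abel--Jacobi embedding of $C$ onto that of $D$, this map is identified with the pullback $\mathsf{aj}^\ast\colon\Pic^0(J)\ra\Pic^0(C)$ along the Abel--Jacobi map. Under the canonical identifications $\Pic^0(C)=J$ and $\Pic^0(J)=\widehat J$, the map $\mathsf{aj}^\ast$ is, up to sign, inverse to the principal polarisation $J\ra\widehat J$ attached to $\Theta_C$; this is the autoduality of the Jacobian, valid over an arbitrary field, and in particular $\mathsf{aj}^\ast$ is an isomorphism (see e.g.\ \cite{ACGH}). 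This yields the fibrewise isomorphism and, by the previous paragraph, concludes the proof.

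The step I expect to be the main obstacle is exactly the passage from a fibrewise isomorphism to a global one over the \emph{non-reduced} base $H$, where naive descent could fail. The fibrewise flatness criterion resolves it cleanly, the decisive observation being that the fibres of the two abelian schemes over scheme-theoretic points of $H$ are defined over fields, so the nilpotents in $\O_H$ do not interfere with the argument.
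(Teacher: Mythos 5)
Your proposal is correct and follows essentially the same route as the paper: both apply the \emph{critère de platitude par fibres} \cite[Th\'eor\`eme 11.3.10]{MR0217086} to deduce that $\eta^\ast$ is flat from its flatness on fibres, and then upgrade the fibrewise isomorphisms (autoduality of the Jacobian on each translate of $C$) to a global isomorphism over the non-reduced base $H$. Your write-up merely spells out some details the paper leaves implicit, such as why the fibrewise restriction map is an isomorphism and how finite flat of degree one yields an isomorphism.
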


\begin{proof}
We use the \emph{crit\`ere de platitude par fibres} \cite[Th\'eor\`eme 11.3.10]{MR0217086} in the following special case: suppose given a scheme $S$ and an $S$-morphism $f\colon X\ra Y$ such that: (a) $X/S$ is finitely presented and flat, (b) $Y/S$ is locally of finite type, and (c) $f_s\colon X_s\ra Y_s$ is flat for each $s\in S$. Then $f$ is flat.
Applying this to $(S,f) = (H,\iota^\ast)$, we conclude that $\iota^\ast$ is flat. But $\Pic^0(H\times J/H)$ is isomorphic, over $H$, to the constant abelian scheme $H\times J$, and $\iota^\ast$ is an isomorphism on each fibre over $H$. Therefore it is a flat, unramified and bijective morphism, hence an isomorphism.
\end{proof}

Let $\alpha$ be a fixed level-$n$ structure on $J$, with $n\geq 3$ chosen as in Section \ref{assum}.
Form the constant level structure $\alpha_H$ on the abelian scheme $H\times J\ra H$.
Transferring the level structure $\alpha_H$ from $H\times J$ to $\Pic^0(\mathcal Z/H)$ using the isomorphism $\iota^\ast$ of Lemma \ref{lemmaisoab}, we can now regard $\mathcal Z\ra H$ as a family of Abel--Jacobi curves with level-$n$ structure. Since $M_g^{(n)}$ is a \emph{fine} moduli space for these objects, we obtain a morphism
\be\label{mapf}
f\colon H \ra M_g^{(n)}.
\ee
Note that the topological image of $f$ is just the point $x\in M_g^{(n)}$ corresponding to $[C,\alpha]$. The tangent map $\dd f$ at the point $[C]\in H$ is the connecting homomorphism
\[
\partial\colon H^0(C,N_C)\ra H^1(C,T_C),
\]
already appeared in \eqref{cohoJac}.

Our next goal is to view the Hilbert scheme $H$ over a suitable artinian scheme $R_g$. Recall the Torelli type morphism $j_n$ introduced in \eqref{jn22}.
We define 
\[
R_g\subset M_g^{(n)}
\]
to be the scheme-theoretic fibre of $j_n$ over the moduli point $[J,\alpha]\in A_g^{(n)}$.
Let $y\in V^{(n)}$ be the image of the point $x = [C,\alpha]$ under the quotient map
\[
M_g^{(n)}\ra V^{(n)} = M_g^{(n)}/\Sigma,
\]
where $\Sigma$ is the involution first appeared in \eqref{sigmainv}.
During the proof of \cite[Cor.~3.2]{OS1} it is shown that one can choose local coordinates $t_1,\dots,t_{3g-3}$ around $x$ such that 
$\Sigma^\ast t_i = t_i$ if $i=1,\dots,2g-1$ and $\Sigma^\ast t_i = -t_i$ if $i=2g,\dots,3g-3$.  Oort--Steenbrink deduce that
\be\label{localstructure}
\widehat{\O}_{y} = \widehat{\O}_{x}^{\,\,\Sigma} = k\llbracket t_1, \dots, t_{2g-1}, t_{2g}^2,t_{2g}t_{2g+1},\dots,t_{3g-3}^2\rrbracket.
\ee
Since we have a factorisation 
\[
j_n\colon M_g^{(n)}\ra V^{(n)}\overset{\iota}{\hookrightarrow} A_g^{(n)}
\]
where $\iota$ is an \emph{immersion} by Theorem \ref{thm:OSemb}, we deduce from \eqref{localstructure} that
\[
R_g = \Spec k[s_1,\dots,s_{g-2}]/\mathfrak m^2,
\]
where $\mathfrak m = (s_1,\dots,s_{g-2})\subset k[s_1,\dots,s_{g-2}]$.
For instance, $R_3$ is the scheme of dual numbers $k[s]/s^2$, and if $g=4$ we get the triple point $k[s,t]/(s^2,st,t^2)$.



Recall the cohomology sequence 
\be\label{longexseq}
0\ra H^1(C,\O_C)\ra H^0(C,N_C)\overset{\partial}{\ra} H^1(C,T_C)\overset{\sigma}{\ra}H^1(C,\O_C)^{\otimes 2},
\ee
where $\sigma$ factors through $\Sym^2 H^1(C,\O_C)$, the tangent space of $\mathcal A_g$ at $[J,\Theta_C]$. Since $C$ is hyperelliptic, the image of $\partial$ has dimension $g-2 > 0$. In other words, the differential $\partial = \dd f$, where $f$ was defined in \eqref{mapf}, does not vanish at the point $[C]\in H$. 
Thus $f$ is not scheme-theoretically constant, although $x = [C,\alpha]\in M_g^{(n)}$ is the only point in the image. On the other hand, the composition
\[
j_n\circ f\colon H\ra M_g^{(n)} \ra A^{(n)}_g
\]
is the constant morphism since its differential is identically zero. 
Indeed the composition
\[
\sigma\circ\partial\colon H^0(C,N_C)\ra H^1(C,T_C)\ra \Sym^2 H^1(C,\O_C)
\]
vanishes by exactness of \eqref{longexseq}. So the image point $[J,\alpha]$ does not deform even at first order, and we conclude that $f$ factors through the scheme-theoretic fibre of $j_n$. This gives us a morphism
\be\label{hilbsuartin}
\pi\colon H\ra R_g.
\ee

We will exploit the following technical lemma.

\begin{lemma}[{\cite[Lemma 1.10.1]{kollar1}}]\label{lemmakollar}
Let $R$ be the spectrum of a local ring, $p\colon U\ra V$ a morphism over $R$, with $U\ra R$ flat and proper. If the restriction $p_0\colon U_0\ra V_0$ of $p$ over the closed point $0\in R$ is an isomorphism, then $p$ is an isomorphism.
\end{lemma}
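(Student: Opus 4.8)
The plan is to prove that $p$ is an isomorphism by establishing, in order, that $p$ is \emph{finite}, then a \emph{closed immersion}, and finally \emph{surjective}, exploiting at each stage the following local principle: a closed subset of $U$ or $V$ that is proper over $R$ and disjoint from the special fibre must be empty, since $R$ is local and hence every nonempty closed subset of $R$ contains the closed point $0$. First I would make the harmless reduction to the case in which $R$ is Noetherian — in our application it is even Artinian local, being (the completion of) the local ring of $R_g$ — so that coherence, Nakayama, and the openness of the quasi-finite locus are all at our disposal. I also use that $V\ra R$ is separated, and, for the very last step, proper; both hold in the intended application, where $V=J\times R$ with $J$ proper over $k$.

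For finiteness, since $U\ra R$ is proper and $V\ra R$ is separated, the morphism $p$ is proper by cancellation. To see it is quasi-finite, recall that the locus in $U$ where $p$ is quasi-finite is open and, by hypothesis, contains all of $U_0$, because $p_0$ is an isomorphism. Its complement $Z\subset U$ is closed; its image in $R$ is closed (as $U\ra R$ is proper) and avoids $0$ (as $Z\cap U_0=\emptyset$), so by the local principle $Z=\emptyset$. Thus $p$ is proper and quasi-finite, hence finite by Zariski's main theorem.

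Now that $p$ is finite, $p_\ast\O_U$ is a coherent sheaf of $\O_V$-algebras and I consider the natural map $\O_V\ra p_\ast\O_U$. Its cokernel is supported on the image $p(U)$, which is closed in $V$ and proper over $R$, and it vanishes along $V_0$ because $p_0$ is an isomorphism (right-exactness of $\otimes_R k(0)$ together with Nakayama on the stalks over points of $V_0$ do the job). The local principle then kills the cokernel, so $\O_V\ra p_\ast\O_U$ is surjective and $p$ is a closed immersion. For the final step I examine the ideal $I=\ker(\O_V\ra p_\ast\O_U)$: flatness of $U/R$ makes $p_\ast\O_U$ flat over $R$, so $\operatorname{Tor}_1^{R}(p_\ast\O_U,k(0))=0$ and hence $I\otimes_R k(0)=0$, i.e. $I=\mathfrak m I$, which gives $\Supp(I)\cap V_0=\emptyset$; using that $V\ra R$ is proper, $\Supp(I)$ maps to a closed subset of $R$ missing $0$, so $I=0$ and $p$ is an isomorphism.

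I expect the main obstacle to be precisely this last surjectivity step. Showing $p$ is a closed immersion is formal once finiteness is in hand, because the relevant cokernel lives on the proper-over-$R$ image $p(U)$; but upgrading a closed immersion to an isomorphism requires controlling $I$ \emph{away} from the special fibre, and $\Supp(I)$ need not be proper over $R$ unless $V\ra R$ itself is proper (equivalently universally closed, so that every point of $V$ specialises into $V_0$). This is exactly why properness of the target — automatic in our application where $V=J\times R$ — enters, and where an argument using only properness of $U$ would break down.
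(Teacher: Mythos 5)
The paper gives no proof of this lemma --- it is imported verbatim from Koll\'ar \cite{kollar1} --- so your argument has to stand on its own, and it does. The chain you run (proper by cancellation, quasi-finite by openness of the quasi-finite locus plus the ``local principle'' that a closed subset of a proper $R$-scheme missing the special fibre is empty, hence finite by Zariski's main theorem; then surjectivity of $\O_V\to p_\ast\O_U$ via affine base change, which identifies $(p_\ast\O_U)\otimes_R k(0)$ with $(p_0)_\ast\O_{U_0}$, followed by Nakayama and the local principle; then vanishing of the ideal sheaf $I$ using $\operatorname{Tor}_1^R(p_\ast\O_U,k(0))=0$ from $R$-flatness of $U$) is correct at every step.

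Your closing caveat is also well taken and worth recording: as transcribed in the paper the lemma imposes no hypothesis on $V\to R$, yet one needs $V/R$ separated already to make $p$ proper (hence for the finiteness and closed-immersion steps), and $V/R$ closed (e.g.\ proper) for the final surjectivity --- a disjoint union $V=p(U)\sqcup W$ with $W_0=\emptyset$ (say $W$ the generic point of a discrete valuation ring $R$) satisfies the literal hypotheses but defeats the conclusion. These extra hypotheses appear in Koll\'ar's original formulation. One mitigating remark for the paper's actual uses: there $R=R_g$ is Artinian local, so $V_0\hookrightarrow V$ is a homeomorphism and $\Supp I\cap V_0=\emptyset$ already forces $I=0$ with no properness of $V$ needed; this matters for the second application, where $V=\Spl_J^{F_n}$ need not be proper, while in the first application $V=H$ is proper over $k$ and hence over $R_g$ in any case.
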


Recall that $J = H_{\red}$, so we have a closed immersion $J\hookrightarrow H$ (with empty complement). Consider the closed point $0\in J$ corresponding to $C$. 
Let us fix a regular sequence $f_1,\ldots,f_g$ in the maximal ideal of $\O_{J,0}$. Choose lifts $\widetilde f_i\in \O_{H,0}$ along the natural surjection $\O_{H,0}\surj \O_{J,0}$, for $i = 1,\ldots, g$. Then we consider the zero scheme
\begin{equation}\label{def:S_g}
i\colon S_g = Z(\widetilde f_1,\ldots,\widetilde f_g)\hookrightarrow H,
\end{equation}
the largest artinian scheme supported at $0\in H$.
We next show that the composition
\be\label{rhomap}
\rho = \pi\circ i\colon S_g\hookrightarrow H\ra R_g
\ee
is an isomorphism, where $\pi$ is defined in \eqref{hilbsuartin}.
We will need the following lemma.

\begin{lemma}\label{lemma3gd82}
Let $\ell\colon k[x_1,\ldots,x_d]/\mathfrak m^2\surj B$ be a surjection of local Artin $k$-algebras such that the differential $\dd \ell$ is an isomorphism. Then $\ell$ is an isomorphism.
\end{lemma}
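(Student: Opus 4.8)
The plan is as follows. Since $\ell$ is surjective by hypothesis, it suffices to show it is injective, i.e.\ that $\ker \ell = 0$. I would write $\mathfrak m_A$ and $\mathfrak m_B$ for the maximal ideals of $A = k[x_1,\ldots,x_d]/\mathfrak m^2$ and of $B$, and read $\dd\ell$ as the induced map on cotangent spaces $\mathfrak m_A/\mathfrak m_A^2 \to \mathfrak m_B/\mathfrak m_B^2$, so that the hypothesis says this map is an isomorphism, in particular injective. The structural fact driving everything is that $A$ is a \emph{square-zero} thickening of $k$: its maximal ideal $\mathfrak m_A$ is the image of $\mathfrak m$, and since we have killed $\mathfrak m^2$ we get $\mathfrak m_A^2 = 0$.

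First I would check that $\ell$ is a local homomorphism, so that $\ker\ell \subseteq \mathfrak m_A$ to begin with. This is automatic: $A$ and $B$ are Artin local $k$-algebras with residue field $k$, so their maximal ideals consist exactly of the nilpotent elements, and a $k$-algebra map carries nilpotents to nilpotents; hence $\ell(\mathfrak m_A)\subseteq\mathfrak m_B$. Next, given $a\in\ker\ell$, its class $\bar a\in \mathfrak m_A/\mathfrak m_A^2$ is sent by $\dd\ell$ to the class of $\ell(a)=0$, hence to $0$; injectivity of $\dd\ell$ then gives $\bar a = 0$, i.e.\ $a\in\mathfrak m_A^2 = 0$. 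Thus $a=0$, so $\ker\ell=0$ and $\ell$ is an isomorphism.

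I do not expect a genuine obstacle: the content is precisely that the square-zero condition $\mathfrak m_A^2=0$ identifies the first-order datum (the differential) with the full comparison of rings, so controlling $\dd\ell$ already controls $\ell$. The only points meriting a word of care are the identification of $\dd\ell$ with the cotangent map (where an isomorphism of finite-dimensional spaces reduces to injectivity, surjectivity being automatic from $\ell$ being onto) and the localness of $\ell$ that places $\ker\ell$ inside $\mathfrak m_A$ in the first place.
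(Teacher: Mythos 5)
Your proof is correct, and it is precisely the elementary argument the paper has in mind (the paper explicitly omits the proof of this lemma): the key point is that $\mathfrak m_A^2=0$ in the source, so an element of $\ker\ell$, which necessarily lies in $\mathfrak m_A$ by locality, dies in the cotangent space and hence vanishes outright once $\dd\ell$ is injective. Nothing is missing.
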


\begin{proof}
Since  $\dd \ell$ is an isomorphism by assumption, $B$ has embedding dimension $d$, hence it can be written as a quotient $k[x_1,\ldots,x_d]/I$, so that its maximal ideal is $\mathfrak m_B = \mathfrak m/I$. Starting from the surjection $\ell$, it is then clear that $\mathfrak m^2 \subset I$, and we have to show the other inclusion. This follows from the chain of isomorphisms
\[
\mathfrak m/\mathfrak m^2 \,\,\widetilde{\to}\,\,\mathfrak m_B/\mathfrak m_B^2 
= \frac{\mathfrak m/I}{(\mathfrak m/I)^2} 
= \frac{\mathfrak m/I}{\mathfrak m^2/I\cap \mathfrak m^2} 
= \frac{\mathfrak m/\mathfrak m^2}{I / \mathfrak m^2},
\]
where the first isomorphism is $(\dd \ell)^\vee$.
\end{proof}

\begin{lemma}\label{lemma327637}
The tangent map $\dd \rho\colon T_{S_g}\ra T_{R_g}$ is an isomorphism.
\end{lemma}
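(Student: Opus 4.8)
The plan is to reduce the claim to linear algebra on the cohomology sequence \eqref{longexseq}, after identifying all three tangent spaces at the marked point $[C]$. The differential of $\pi$ from \eqref{hilbsuartin} is the connecting map $\partial\colon H^0(C,N_C)\ra H^1(C,T_C)$, since $\pi$ is the factorisation through $R_g$ of the classifying morphism $f$ of \eqref{mapf}, whose differential is $\partial$. As $R_g$ is the scheme-theoretic fibre of $j_n$, and the tangent map of $j_n$ is $\sigma$, the tangent space $T_{R_g}$ equals $\ker\sigma$; by exactness of \eqref{longexseq} this is $\im\partial$, of dimension $g-2$. Hence $\dd\rho$ is a map between two spaces of dimension $g-2$, and it will be enough to prove injectivity.

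Next I would pin down $T_{S_g}\subseteq T_H = H^0(C,N_C)$ as the common kernel $\bigcap_{i=1}^{g}\ker(\dd\widetilde f_i)$ of the differentials of the defining equations. The input I need here is that $\dd f_1,\dots,\dd f_g$ form a basis of the cotangent space $(T_J)^\vee = H^1(C,\O_C)^\vee$ of the smooth $g$-fold $J$ at $0$; this is where one uses that $f_1,\dots,f_g$ is a regular system of parameters, i.e. a minimal set of generators of $\mathfrak m_{J,0}$, and not merely an arbitrary regular sequence. The closed immersion $J = H_{\red}\hookrightarrow H$ corresponds to the surjection $\O_{H,0}\surj\O_{J,0}$ carrying $\widetilde f_i$ to $f_i$; dually, the restriction of each functional $\dd\widetilde f_i$ to the subspace $T_J\subseteq T_H$ is exactly $\dd f_i$. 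Since the $\dd f_i$ are linearly independent, so are the $\dd\widetilde f_i$, and therefore $T_{S_g}$ has codimension $g$ in $T_H$, i.e. dimension $(2g-2)-g = g-2$.

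Finally, to prove $\dd\rho = \partial|_{T_{S_g}}$ injective, I would compute $T_{S_g}\cap\ker\partial$. By exactness of \eqref{longexseq}, $\ker\partial = H^1(C,\O_C)$, which is precisely the subspace $T_J\subseteq T_H$ spanned by the family of translations. A vector $v\in T_{S_g}\cap T_J$ lies in $T_J$ and is killed by every $\dd\widetilde f_i$; but on $T_J$ one has $\dd\widetilde f_i = \dd f_i$, and the $\dd f_i$ span $(T_J)^\vee$, forcing $v = 0$. Thus $\dd\rho$ is injective, and being a linear map between spaces of equal dimension $g-2$ it is an isomorphism.

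The genuinely delicate point is the compatibility invoked twice above: that restricting $\dd\widetilde f_i$ along $T_J\hookrightarrow T_H$ returns $\dd f_i$. This is no more than functoriality of the cotangent map for the immersion $J\hookrightarrow H$ combined with the fact that $\widetilde f_i$ lifts $f_i$, but it has to be set up carefully, and it is also the place where taking the $f_i$ to be a minimal generating set of $\mathfrak m_{J,0}$ is essential for the codimension count to land on $g-2$ exactly.
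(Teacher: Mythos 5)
Your argument is correct and follows essentially the same route as the paper: identify $T_{R_g}$ with $\im\partial=\ker\sigma$, of dimension $g-2$, show $T_{S_g}\cap T_0J=0$ so that the projection of $T_{S_g}\subset T_0H = T_0J\oplus T_{R_g}$ onto $T_{R_g}$ is injective, and conclude by the dimension count $\dim T_{S_g}=(2g-2)-g=g-2$. Your observation that the $f_i$ must form a regular system of parameters (so that $S_g\cap J$ is the reduced origin and the $\dd f_i$ span $(T_0J)^\vee$), and not merely a regular sequence of length $g$, is a worthwhile precision that the paper leaves implicit.
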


\begin{proof}
The kernel of $H^1(C,T_C)\ra H^1(C,\O_C)^{\otimes 2}$, namely the image of $\partial\colon H^0(C,N_C)\ra H^1(C,T_C)$, is the tangent space $T_{R_g}$ to the artinian scheme $R_g$, as the latter is by definition the fibre of $j_n$.
We then have a direct sum decomposition $T_0H = T_0J\oplus T_{R_g}$. 
The intersection of $S_g$ and $J$ inside $H$ is the reduced origin $0\in J$, so 
the linear subspace $T_{S_g}\subset T_0H$ intersects $T_0J$ trivially, which implies that the tangent map
\[
\dd\rho\colon T_{S_g}\subset T_0J\oplus T_{R_g}\ra T_{R_g}
\]
is injective. On the other hand, the inclusion $T_{S_g}\subset T_0H$ is cut out by independent linear functions, again because $T_{S_g}\cap T_0J = (0)$. It follows that the linear inclusion $T_{S_g}\subset T_0H$ has codimension equal to $\dim T_0J = g$, thus
\[
\dim T_{S_g} = \dim T_0H - g = g-2 = \dim T_{R_g}.
\]
The result follows.
\end{proof}


\begin{cor}\label{corhuqweqe}
The map $\rho\colon S_g\ra R_g$ of \eqref{rhomap} is an isomorphism.
\end{cor}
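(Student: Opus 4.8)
The plan is to reduce the statement to the elementary Lemma \ref{lemma3gd82}, applied to the comorphism $\rho^\sharp\colon \O_{R_g}\ra \O_{S_g}$. First I would observe that $S_g$ is concentrated at the single point $0$: topologically $S_g$ is contained in the scheme-theoretic intersection $S_g\cap J = Z(f_1,\dots,f_g)\subset J$, which is the reduced point $0$ because $f_1,\dots,f_g$ is a regular system of parameters in the $g$-dimensional regular local ring $\O_{J,0}$. Consequently $\O_{S_g} = \O_{H,0}/(\widetilde f_1,\dots,\widetilde f_g)$ is a local Artin $k$-algebra with residue field $k$, and $\rho$ corresponds to a local $k$-algebra homomorphism $\rho^\sharp\colon \O_{R_g}\ra \O_{S_g}$.

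Next I would match the hypotheses of Lemma \ref{lemma3gd82}. Its source must be of the form $k[x_1,\dots,x_d]/\mathfrak m^2$; here the source is precisely $\O_{R_g} = k[s_1,\dots,s_{g-2}]/\mathfrak m^2$, so the hypothesis holds with $d = g-2$. The differential of $\rho^\sharp$ is, by definition, the tangent map $\dd\rho\colon T_{S_g}\ra T_{R_g}$, which is an isomorphism by Lemma \ref{lemma327637}. It then only remains to check that $\rho^\sharp$ is surjective, after which Lemma \ref{lemma3gd82} yields that $\rho^\sharp$, hence $\rho$, is an isomorphism.

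The one point requiring a (routine) argument is the surjectivity of $\rho^\sharp$, and this is where I would be most careful. Since $\dd\rho$ is an isomorphism, the induced map on cotangent spaces $\mathfrak m_{R_g}/\mathfrak m_{R_g}^2\ra \mathfrak m_{S_g}/\mathfrak m_{S_g}^2$ is surjective, so $\mathfrak m_{S_g} = \rho^\sharp(\mathfrak m_{R_g})\,\O_{S_g} + \mathfrak m_{S_g}^2$. As $\mathfrak m_{S_g}$ is nilpotent (an Artin local ring), Nakayama's lemma gives $\mathfrak m_{S_g} = \rho^\sharp(\mathfrak m_{R_g})\,\O_{S_g}$, and a second application of Nakayama to the finite $\O_{R_g}$-module $\O_{S_g}/\rho^\sharp(\O_{R_g})$ shows that $\rho^\sharp$ is onto. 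With surjectivity in hand, Lemma \ref{lemma3gd82} applies verbatim and finishes the proof. I expect no serious obstacle, only the bookkeeping above; in particular, no flatness or properness input (and in particular not Lemma \ref{lemmakollar}) is needed here, those being reserved for the subsequent globalisation to the isomorphism $H\cong J\times R_g$.
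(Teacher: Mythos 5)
Your proof is correct, and in its essentials it follows the same strategy as the paper: both arguments feed Lemma \ref{lemma327637} into Lemma \ref{lemma3gd82}. The one place where you genuinely diverge is in establishing the surjectivity of $\rho^\sharp$. The paper gets this geometrically: $\rho$ is proper, injective on points and injective on tangent vectors, hence a closed immersion, and only then does it invoke Lemma \ref{lemma3gd82}. You instead observe that $S_g$ and $R_g$ are both one-point Artin schemes, so that $\rho$ is nothing but a local homomorphism of Artin local $k$-algebras with residue field $k$, and you extract surjectivity from the isomorphism on (co)tangent spaces by two applications of Nakayama's lemma. This is a more elementary and self-contained route for that step --- it avoids the properness input altogether, at the small cost of having to check explicitly that $S_g$ is concentrated at the origin, which you do correctly from the fact that $f_1,\ldots,f_g$ is a regular sequence of length $g$ in the $g$-dimensional local ring $\O_{J,0}$ (so its zero locus in $J$ is topologically the point $0$; note the paper only needs, and you only use, the topological statement here). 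Either way the hypotheses of Lemma \ref{lemma3gd82} are met and the conclusion follows.
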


\begin{proof}
The map $\rho$ is proper, injective on points and, by Lemma \ref{lemma327637}, injective on tangent spaces. Then it is a closed immersion; in fact, by Lemma \ref{lemma327637} again, it is an isomorphism on tangent spaces, so by Lemma \ref{lemma3gd82} it is an isomorphism.
\end{proof}

The corollary yields a section of $\pi$,
\[
s = i\circ \rho^{-1}\colon R_g\, \widetilde{\ra}\, S_g\hookrightarrow H,
\]
which finally allows us to prove the main result of this paper.

\begin{teo}\label{main}
Let $C$ he a hyperelliptic curve of genus $g\geq 2$, and let $J$ be its Jacobian. Then there is an isomorphism of schemes
\[
J\times R_g\,\,\widetilde{\ra}\,\, H.
\]
\end{teo}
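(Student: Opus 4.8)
The plan is to realise the sought isomorphism as the orbit map of the translation action of $J$ on $H$, rigidified by the section $s$, and then to invoke Kollár's Lemma \ref{lemmakollar}. Translation of closed subschemes makes $J$ act on $\Hilb_J$; since a translate of a translate of $C$ is again a translate of $C$, this action preserves the component $H$, giving a morphism $a\colon J\times H\ra H$ that is transitive on $H_{\red}=J$. The key point is that $\pi$ is invariant under this action: translating a translate $D=v+C$ by $j$ produces the curve $j+D$, which is isomorphic to $D$ via the translation and induces the identity on $\Pic^0$, so the associated curve-with-level-structure is unchanged. Since $M_g^{(n)}$ is a fine moduli space, the two families of curves-with-level-structure obtained by pulling back the universal curve along $a$ and along the second projection are isomorphic, which yields $\pi\circ a=\pi\circ\mathrm{pr}_H$ as morphisms $J\times H\ra R_g$.

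I would then define
\[
\Phi\colon J\times R_g\xrightarrow{\ \mathrm{id}\times s\ } J\times H\xrightarrow{\ a\ } H,\qquad \Phi(j,r)=a(j,s(r)).
\]
Using the $J$-invariance of $\pi$ together with $\pi\circ s=\mathrm{id}_{R_g}$ one gets $\pi\circ\Phi=\mathrm{pr}_{R_g}$, so $\Phi$ is a morphism over $R_g$. As $J\times R_g\ra R_g$ is flat and proper, the hypotheses of Lemma \ref{lemmakollar} (with $R=R_g$, $U=J\times R_g$ and $V=H$ over $R_g$ via $\pi$) are in place, and it remains only to check that the restriction $\Phi_0$ over the closed point $0\in R_g$ is an isomorphism.

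The crux is to identify the central fibre $\pi^{-1}(0)$ with $J$. Since $J=H_{\red}$ is reduced, the inclusion $J\hookrightarrow H$ factors through $0$, so $J\subseteq\pi^{-1}(0)$; for the reverse inclusion I would compute the Zariski tangent space at the base point, where $\dd\pi_0$ is the connecting map $\partial$ of \eqref{longexseq}, whence $T_0(\pi^{-1}(0))=\ker\dd\pi_0=\ker\partial=H^1(C,\O_C)=T_0J$ has dimension $g=\dim J$. Thus $\pi^{-1}(0)$ is smooth, hence reduced, at $0$; because $a$ preserves $\pi^{-1}(0)$ and acts transitively on its points, reducedness propagates to every point, giving $\pi^{-1}(0)=J$. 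Consequently $\Phi_0\colon J\ra\pi^{-1}(0)=J$ is the translation orbit map $j\mapsto j+C$, an isomorphism onto $H_{\red}$, and Lemma \ref{lemmakollar} upgrades this to an isomorphism $\Phi\colon J\times R_g\,\widetilde{\ra}\,H$. The main obstacle I anticipate is precisely the identification $\pi^{-1}(0)=J$: it requires combining the first-order computation of Lemma \ref{lemma327637} with the homogeneity supplied by the group action, and, upstream of that, pinning down the $J$-invariance of $\pi$ at the level of families rather than merely of points.
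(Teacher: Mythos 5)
Your proof follows essentially the same route as the paper: compose the translation action $\mu\colon J\times H\to H$ with $\mathrm{id}_J\times s$, view the result as a morphism over $R_g$, and apply Lemma \ref{lemmakollar}. The extra details you supply --- the $J$-invariance of $\pi$ via the fine moduli property, and the identification $\pi^{-1}(0)=J$ via the tangent-space computation plus homogeneity --- are correct and merely make explicit two points that the paper's one-line proof leaves implicit.
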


\begin{proof}
If $g = 2$, the Hilbert scheme is nonsingular because $\partial\colon H^0(C,N_C)\ra H^1(C,T_C)$, the connecting homomorphism  in \eqref{cohoJac}, vanishes. If $g\geq 3$, consider the translation action $\mu\colon J\times H\ra H$ by $J$ on the Hilbert scheme and the composition
\[
J\times R_g\xhookrightarrow{\textrm{id}_J\times s} 
J\times H\xrightarrow{\mu} 
H,
\]
viewed as a morphism over the artinian scheme $R_g$. Since it restricts to the identity $\textrm{id}_J$ over the closed point of $R_g$, by Lemma \ref{lemmakollar} it must be an isomorphism.
\end{proof}

\subsection{Relation between Hilbert scheme and Torelli}\label{remark:torelli_fibre}
Let $z = [J,\Theta_C]$ be a point in the image of the Torelli morphism $\tau_g\colon \mathcal M_g \to \mathcal A_g$. The fibre of $\tau_g$ over $\Spec k(z) \to \mathcal A_g$ is, topologically, just a point, by Torellli's theorem. This point is scheme-theoretically reduced if $C$ is non-hyperelliptic. However, thanks to the cartesian diagram \eqref{square734}, what we can observe is that $\tau_g^{-1}(z) = \mathcal M_g \times_{\mathcal A_g} \Spec k(z) \subset \mathcal M_g$ is the artinian 
scheme $R_g$ when $z$ represents a hyperelliptic Jacobian. Theorem \ref{main} thus fully develops in a qualitative form the idea already present in \cite{LangeSernesi}, namely that understanding the ramification (the fibres) of the Torelli morphism is equivalent to understanding the singularities of the Hilbert scheme: what the present work shows is that these singularities are controlled by the artinian scheme $R_g$.

The results proved so far essentially show the following.

\begin{prop}
Let $C$ be a smooth curve of genus $g\geq 2$, and let $J$ be its Jacobian. 
Then $\tau_g^{-1}([J,\Theta_C])$ is isomorphic to the largest closed subscheme of $\Hilb_{C/J}$ supported at $[\mathsf{aj}\colon C \hookrightarrow J]$.
\end{prop}

\begin{proof}
In the non-hyperelliptic case, we have $\tau_g^{-1}([J,\Theta_C]) \cong \Spec k$, because $\tau_g$ is unramified at $[C]$. The result then follows because $J\to \Hilb_{C/J}$ is an isomorphism (by Corollary \ref{cor:non_hyperelliptic_iso}). In the hyperelliptic case we get, using Lemma \ref{corhuqweqe},
\[
S_g \,\,\widetilde{\to}\,\, R_g = \tau_g^{-1}([J,\Theta_C]),
\]
where $S_g \subset \Hilb_{C/J}$, introduced in \eqref{def:S_g}, is precisely the largest 
subscheme of the Hilbert scheme supported at $[\mathsf{aj}\colon C \hookrightarrow J]$.
\end{proof}

\subsection{Donaldson--Thomas invariants for Jacobians}\label{sec:DT}
Let $C$ be a smooth complex projective curve of genus $3$. One can study the ``$C$-local Donaldson--Thomas invariants'' of the abelian $3$-fold $J=\Pic^0C$. As explained in \cite{LocalDT,Ricolfi2018}, these invariants are completely determined by the ``BPS number'' of the curve,
\[
n_C = \nu_H(\mathscr I_C)\in \Z,
\]
in the sense that their generating function is equal to the rational function
\[
n_C\cdot q^{-2}(1+q)^{4}.
\]
Here $\nu_H\colon \Hilb_{C/J}\ra \Z$ is the Behrend function of the Hilbert scheme. The Behrend function attached to a general finite type $\C$-scheme $X$ is an invariant of the singularities of $X$. It was introduced in \cite{Beh} and is now a key tool in Donaldson--Thomas theory. For a smooth scheme $Y$ one has that $\nu_Y$ is the constant $(-1)^{\dim Y}$, and moreover $\nu_{X\times Y} = \nu_X\cdot \nu_Y$ for two complex schemes $X$ and $Y$.
While for non-hyperelliptic $C$ we have $n_C=-1$ (because the Hilbert scheme is a copy of the smooth $3$-fold $J$), the structure result
\[
\Hilb_{C/J} = J\times \Spec \C[s]/s^2
\]
in the hyperelliptic case yields $n_C = -2$, because the scheme of dual numbers has Behrend function $\nu_{R_3} = 2$.

\section{An application to moduli spaces of Picard sheaves}\label{sec:picsheaves}

Mukai introduced in \cite{Mukai1} his celebrated Fourier transform, and gave an application to the moduli space of Picard sheaves on Jacobians of curves. We now review his results on non-hyperelliptic Jacobians and extend them to the hyperelliptic case. We assume that the base field $k$ is, as ever, algebraically closed of characteristic different from $2$.

We let $\Phi\colon \mathrm D^b(\widehat J)\ra \mathrm D^b(J)$ be the Fourier transform with kernel the Poincar\'e line bundle $\mathscr P\in \Pic(\widehat J\times J)$. 
If $\widehat{\mathsf p}\colon\widehat J\times J\ra \widehat J$ and $\mathsf p\colon\widehat J\times J\ra J$ are the projections, by definition one has
\[
\Phi(\mathscr E) = R\mathsf p_\ast(\widehat{\mathsf p}^\ast \mathscr E\otimes \mathscr P).
\]
We will denote by $\Phi^i(\mathscr E)$ the $i$-th cohomology sheaf of the complex $\Phi(\mathscr E)$.

Let $p_0\in C$ be a point on a smooth curve of genus $g\geq 2$. Let us form the line bundle $\xi = \O_C(dp_0)$. From now on we view it as a sheaf on $\widehat J$ by pushing it forward along the Abel--Jacobi map $\mathsf{aj}\colon C\hookrightarrow J$ followed by the identification of $J$ with its dual. 
Applying his Fourier transform, Mukai constructs
\be\label{simpleF}
F = \Phi^1(\mathsf{aj}_\ast\xi),
\ee
a \emph{Picard sheaf} of rank $g-d-1$ living on $J$. Assume $1\leq d\leq g-1$, so that by \cite[Lemma 4.9]{Mukai1} we know that $F$ is simple (that is, $\End_{\O_J}(F) = k$), and
\be\label{tgspacesimple}
\dim \Ext^1_{\O_J}(F,F) = 
\begin{cases}
2g & \textrm{if }C\textrm{ is not hyperelliptic} \\
3g-2 & \textrm{if }C\textrm{ is hyperelliptic}. \\
\end{cases}
\ee
Let $\Spl_J$ be the moduli space of simple coherent sheaves on $J$, and let $M(F)\subset \Spl_J$ be the connected component containing the point corresponding to $F$. It is shown in \cite[Theorem 4.8]{Mukai1} that if $g = 2$ or $C$ is non-hyperelliptic, the morphism
\be\label{mukaimap}
f\colon\widehat J\times J\ra M(F),\quad (\eta,x)\mapsto \mathsf t_x^\ast F\otimes \mathscr P_\eta,
\ee
is an isomorphism. By \eqref{tgspacesimple}, the space $M(F)$ is reduced precisely when $C$ has genus $2$ or is non-hyperelliptic. For $C$ hyperelliptic, $f$ turns out to be an isomorphism onto the reduction $M(F)_{\red}\subsetneq M(F)$, as Mukai showed in \cite[Example 1.15]{Mukai2}. 

\begin{remark}
The moduli space $M(F)$ is a priori only an algebraic space. But an algebraic space is a scheme if and only if its reduction is a scheme. Therefore $M(F)$ is a scheme because of the isomorphism $\widehat J\times J \cong M(F)_{\red}$.
\end{remark}

The following result, which can be seen as a corollary of Theorem \ref{main}, completes the study of Picard sheaves on Jacobians considered by Mukai, namely those of rank $g-d-1$, with $d\leq g-1$.

\begin{teo}\label{cor8183}
Let $C$ be a hyperelliptic curve of genus $g\geq 2$. Let $J$ be its Jacobian and $F$ a Picard sheaf as above. Then, as schemes,
\[
M(F) = \widehat J\times J\times R_g.
\]
\end{teo}

\begin{proof}
The case $g = 2$ is already covered by Mukai's tangent space calculation. By Theorem \ref{main}, it is enough to exhibit an isomorphism $\widehat J\times H\,\widetilde{\ra}\,M(F)$, where as usual $H\subset \Hilb_J$ is the Hilbert scheme component containing the Abel--Jacobi point $[C]$. We will do this by extending the morphism \eqref{mukaimap} defined by Mukai, that is, completing the diagram
\be\label{jdjhaadweq2}
\begin{tikzcd}
\widehat J\times J \arrow{r}{\sim}\arrow[hook]{d} &
M(F)_{\red} \arrow[hook]{d} \\
\widehat J\times H \arrow[dotted]{r}{\phi} & 
M(F)
\end{tikzcd}
\ee
and showing that the extension $\phi$ is an isomorphism. Recall that via the identification $J = H_{\red}$ we can identify a $k$-valued point $x \in J(k)$ with a $k$-valued point of $H$. Also, for any such point $x \in J \subset H$, we will use the notation $x+p_0$ for the point on the Abel--Jacobi curve $\mathsf t_xC \subset J$ obtained by translating $p_0 \in C \subset J$ via the automorphism $\mathsf t_x\colon J\to J$.
Let
\[
\mathcal Z\xhookrightarrow{\iota} H\times J \ra H
\]
be the universal family of the Hilbert scheme: the fibre of $\mathcal Z \to H$ over $\Spec k(x) \hookrightarrow H$ is the subscheme $\mathsf t_xC \subset J$, and $\iota$, the universal Abel--Jacobi map, restricts to $\mathsf{aj}\circ \mathsf t_{-x}\colon\mathsf t_xC\ra C\hookrightarrow \{x\}\times J$ over the point $x \in H$. We now construct a section $\sigma$ of $\mathcal Z\ra H$ restricting to the divisor $dp_0$ on $C$ (in other words: a ``universal'' version of $\xi$). If $q\colon H\ra J$ denotes the projection (forgetting the non-reduced structure) and $u\colon J\ra J$ is the composition $\mathsf t_{dp_0}\circ [d]$, the section $\sigma$ is the map
\[
\sigma\colon H\xrightarrow{(1_H,q)} H\times J\xrightarrow{1_H\times u} H\times J, \quad x\mapsto (x,d(x+p_0)).
\]
Here we view $d(x+p_0)$ as a degree $d$ divisor on the translated Abel--Jacobi curve $\mathsf t_x C \subset J$, in particular the image of $\sigma$ clearly lands inside $\mathcal Z$. 
Let $\mathscr L = \O_{\mathcal Z}(\sigma)$ be the associated line bundle on the total space $\mathcal Z$. 
Then, by construction, restricting $\mathscr L$ to a fibre of $\mathcal Z\ra H$ we get
\be\label{913813}
\mathscr L|_{\mathsf t_xC} = \O_{\mathsf t_xC}(d(x+p_0)) = \mathsf t_{-x}^\ast \xi.
\ee
If we consider the pushforward $\iota_\ast\mathscr L$ to $H\times J$, using Equation \eqref{913813} we obtain
\be\label{ewe323}
(\iota_\ast\mathscr L)|_{x\times J} = 
(\mathsf{aj}\circ \mathsf t_{-x})_\ast (\mathscr L|_{\mathsf t_xC}) = 
\mathsf{aj}_\ast \xi.
\ee
Note that $\mathscr L$ is flat over $H$ (because $\mathcal Z\ra H$ is flat), therefore the same is true for $\iota_\ast\mathscr L$. Since taking the Fourier--Mukai transform commutes with base change, Equation \eqref{ewe323} yields
\be\label{afafferf}
\Phi^1(\iota_\ast \mathscr L)|_{x\times J} = \Phi^1(\mathsf{aj}_\ast \xi) = F.
\ee
Now we consider the following diagram:
 \[ 
 \begin{tikzcd}
 & (\widehat J\times J)\times J\arrow[hook]{d}{i}\arrow{r}{\sim} & 
 (J\times J) \times J\arrow[hook]{d}\arrow{r}{m\times \textrm{id}_J} & 
 J\times J \arrow[hook]{d}\arrow{r}{{\textrm{pr}}_1}
 & J \\
 \widehat J\times J & 
 (\widehat J\times H)\times J\arrow{r}{\sim}\arrow[swap]{l}{\textrm{pr}_{13}} & 
 (J\times H) \times J\arrow{r}{\mu\times \textrm{id}_J} & 
 H\times J & 
 \end{tikzcd}
 \]
where $m$ and $\mu$ are the translation actions by $J$ on $J$ and $H$ respectively. The Fourier--Mukai transform $\Phi^1(\iota_\ast \mathscr L)$ lives on $H\times J$ and is flat over $H$, by flatness of $\iota_\ast\mathscr L$.
By \eqref{afafferf}, we know that the families of sheaves $\Phi^1(\iota_\ast \mathscr L)|_{J\times J}$ and $\textrm{pr}_1^\ast F$ (both flat over $J$) define the same morphism $J\ra M(F)$, namely the constant morphism hitting the point $[F]$. Since Mukai's morphism $\widehat J\times J\ra M(F)$, defined in \eqref{mukaimap}, corresponds (after identifying $J$ with its dual) to the family of sheaves 
\[
(m\times \textrm{id}_J)^\ast \textrm{pr}_1^\ast F\otimes ({\textrm{pr}}_{13}\circ i)^\ast \mathscr P,
\]
it follows that the family 
\[
(\mu\times \textrm{id}_J)^\ast \Phi^1(\iota_\ast\mathscr L)\otimes \textrm{pr}_{13}^\ast \mathscr P
\]
defines an extension $\phi\colon \widehat J \times H\ra M(F)$, completing diagram \eqref{jdjhaadweq2}. We know that $\phi$ is an isomorphism around $[\xi]\mapsto [F]$. Indeed, $\phi$ is precisely the morphism constructed by Mukai in \cite[Prop.~1.12]{Mukai2}, where he proves that $M(\xi)$ and $M(F)$ are isomorphic along a Zariski open subset. The construction is homogeneous, in the sense that $\phi$ does not depend on the initial point $[\xi]\in M(\xi)$. Therefore $\phi$ is globally an isomorphism, as claimed.
\end{proof}

\begin{remark}
The connected component $M(\xi)$ of the moduli space of simple sheaves containing the point $[\xi]$ is the relative Picard variety $\Pic^d(\mathcal Z/H)$, which can be identified with $\widehat J\times H$ by Lemma \ref{lemmaisoab}. It is possible to adapt the proof of \cite[Prop.~1.12]{Mukai2} to show that the birational map 
\[
\Pic^d(\mathcal Z/H)\dashrightarrow M(F)
\]
is everywhere defined (and an isomorphism), giving an immediate proof of Corollary \ref{cor8183}. We preferred to present the argument above, because the construction makes the isomorphism $\phi\colon \widehat J\times H\ra M(F)$ arise directly, as a ``thickening'' of Mukai's isomorphism $\widehat J\times J \ra M(F)_{\red}$. Moreover the argument makes explicit use of (the properties of) the  Fourier--Mukai transform.
\end{remark}

\bigskip
{\noindent{\bf Acknowledgments}.
We are glad to thank Alberto Collino for generously sharing his insight and ideas on the problem. We also thank Martin Gulbrandsen, Micha\l{} Kapustka, Aaron Landesman, Richard Thomas and Filippo Viviani for helpful discussions, and the anonymous referees for suggesting several improvements.}

\clearpage
\bibliographystyle{amsplain}
\bibliography{bib}
\end{document}